\newcommand{\kom}[1]{}
\renewcommand{\kom}[1]{{\bf [#1]}}
 \def\1{\raisebox{2pt}{\rm{$\chi$}}}
\newtheorem{theorem}{Theorem}[section]
\newtheorem{corollary}[theorem]{Corollary}
\newtheorem{lemma}[theorem]{Lemma}
\theoremstyle{definition}
\newtheorem{definition}[theorem]{Definition}
\newtheorem{remark}[theorem]{Remark}
\newtheorem{example}[theorem]{Example}
\numberwithin{equation}{section}
\newcommand{\R}{{\mathbb R}}
\newcommand{\loc}{{\text{loc}}}
 \newcommand{\eps}{{\varepsilon}}
\newcommand{\ph}{{\varphi}}
 \def\1{\raisebox{2pt}{\rm{$\chi$}}}
\newcommand{\abs}[1]{\left|#1\right|}
\def\vint_#1{\mathchoice%
          {\mathop{\kern 0.2em\vrule width 0.6em height 0.69678ex depth -0.58065ex
                  \kern -0.8em \intop}\nolimits_{\kern -0.4em#1}}%
          {\mathop{\kern 0.1em\vrule width 0.5em height 0.69678ex depth -0.60387ex
                  \kern -0.6em \intop}\nolimits_{#1}}%
          {\mathop{\kern 0.1em\vrule width 0.5em height 0.69678ex
              depth -0.60387ex
                  \kern -0.6em \intop}\nolimits_{#1}}%
          {\mathop{\kern 0.1em\vrule width 0.5em height 0.69678ex depth -0.60387ex
                  \kern -0.6em \intop}\nolimits_{#1}}}
\def\vintslides_#1{\mathchoice%
          {\mathop{\kern 0.1em\vrule width 0.5em height 0.697ex depth -0.581ex
                  \kern -0.6em \intop}\nolimits_{\kern -0.4em#1}}%
          {\mathop{\kern 0.1em\vrule width 0.3em height 0.697ex depth -0.604ex
                  \kern -0.4em \intop}\nolimits_{#1}}%
          {\mathop{\kern 0.1em\vrule width 0.3em height 0.697ex depth -0.604ex
                  \kern -0.4em \intop}\nolimits_{#1}}%
          {\mathop{\kern 0.1em\vrule width 0.3em height 0.697ex depth -0.604ex
                  \kern -0.4em \intop}\nolimits_{#1}}}
\newcommand{\aveint}[2]{\mathchoice%
          {\mathop{\kern 0.2em\vrule width 0.6em height 0.69678ex depth -0.58065ex
                  \kern -0.8em \intop}\nolimits_{\kern -0.45em#1}^{#2}}%
          {\mathop{\kern 0.1em\vrule width 0.5em height 0.69678ex depth -0.60387ex
                  \kern -0.6em \intop}\nolimits_{#1}^{#2}}%
          {\mathop{\kern 0.1em\vrule width 0.5em height 0.69678ex depth -0.60387ex
                  \kern -0.6em \intop}\nolimits_{#1}^{#2}}%
          {\mathop{\kern 0.1em\vrule width 0.5em height 0.69678ex depth -0.60387ex
                  \kern -0.6em \intop}\nolimits_{#1}^{#2}}}
\newcommand{\vp}{\varphi}
\newcommand{\bd}{\partial}
\newcommand{\esssup}{\operatornamewithlimits{ess\, sup}}
\newcommand{\essinf}{\operatornamewithlimits{ess\,inf}}
\newcommand{\B}{\mathcal B}
\begin{document}

\title[]{Supercaloric functions for the porous medium equation}

\author{}
\author[J.\! Kinnunen]{Juha Kinnunen}  
\address[J. Kinnunen]{Department of Mathematics, Aalto University, P. O. Box 11100, FI-00076 Aalto University, Finland}
\email{juha.k.kinnunen@aalto.fi}

\author[P.\! Lehtel\"a]{Pekka Lehtel\"a}  
\address[P. Lehtel\"a]{Department of Mathematics, Aalto University, P. O. Box 11100, FI-00076 Aalto University, Finland}
\email{pekka.lehtela@aalto.fi}

\author[P.\! Lindqvist]{Peter Lindqvist}  
\address[P. Lindqvist]{Department of Mathematics, Norwegian University of Science and Technology, N-7491 Trondheim,
Norway}
\email{peter.lindqvist@ntnu.no}

\author[M.\! Parviainen]{Mikko Parviainen}   
\address[M. Parviainen]{University of Jyvaskyl\"a, Department of Mathematics and Statistics, P. O. Box 35, FI-40014 University of Jyvaskyl\"a, Finland}
\email{mikko.j.parvianen@jyu.fi}

%\date{\today}
\keywords{Porous medium equation, supercaloric functions, Barenblatt solution, friendly giant} 
\subjclass[2010]{35K55, 35K65, 35K20}
\thanks{The research is supported by the Academy of Finland, the Emil Aaltonen Foundation and the Norwegian Research Council.}

\begin{abstract}
In the slow diffusion case unbounded supersolutions of the porous medium equation are of two totally different types,
depending on whether the pressure is locally integrable or not.
This criterion and its consequences are discussed.
\end{abstract}

\maketitle

\section{Introduction}
\label{sec:intro}

The porous medium equation 
\begin{equation}\label{eq:pme}
u_t-\Delta(u^m)=0,
\quad m>1,
\end{equation}
has a well developed theory for its solutions treated, for example, in the monographs 
\cite{dibenedetto12}, \cite{daskalopoulosk07}, \cite{vazquez07} and \cite{wu01}.
Here $u=u(x,t)$ is a non-negative function on $\Omega_T=\Omega\times(0,T)$, where $\Omega\subset\R^n$.
We are interested in supersolutions of \eqref{eq:pme} in the slow diffusion case $m>1$.
A supersolution should satisfy the inequality $u_t-\Delta(u^m)\ge0$, but this is a delicate issue.
For a function $u:\Omega_T\to[0,\infty]$, we consider the two definitions below:
\begin{itemize}
\item  (Weak supersolution) We say that $u$ is a weak supersolution, if $u^m\in L_{\loc}^2(0,T; W^{1,2}_{\loc}(\Omega))$ and
\begin{equation}\label{eq:pme_weak}
\iint_{\Omega_T} \left( -u\varphi_t+ \nabla (u^m) \cdot \nabla \varphi\right) \, dx \, dt\ge 0
\end{equation}
for every non-negative test function $\vp\in C^\infty_0(\Omega_T)$. 
\item ($m$-supercaloric function)
We say that $u$ is $m$-supercaloric, if 
  \begin{itemize}
  \item[(i)] $u$ is lower semicontinuous,
  \item[(ii)] $u$ is finite in a dense subset of $\Omega_T$ and
  \item[(iii)] $u$ obeys the comparison principle with respect to solutions in every subdomain $D_{t_1,t_2}=D\times(t_1,t_2)$, $D_{t_1,t_2}\Subset \Omega_T$:
if $h\in C(\overline{D_{t_1,t_2}})$ is a weak solution of \eqref{eq:pme} in $D_{t_1,t_2}$ and $u\ge h$ on the parabolic boundary $\bd_p D_{t_1,t_2}$, then $u\ge h$ in $D_{t_1,t_2}$.
\end{itemize}
\end{itemize}
In a similar manner, we may also consider solutions defined, for example, in $\Omega\times(-\infty,\infty)$ or in $\R^n\times\R$.
Since our results are local, we may restrict ourselves to space-time cylinders in $\R^{n+1}$. 
The case $m=1$ gives supercaloric functions for the heat equation. 

Several remarks are appropriate.
First, weak supersolutions obey the comparison principle, see \cite{daskalopoulosk07}, \cite{vazquez07} and \cite{wu01}, and by \cite{avelin15} they are lower semicontinuous, after a possible redefinition on a set of $(n+1)$-dimensional Lebesgue measure zero.
Thus every weak supersolution is an $m$-supercaloric function.
Second, by \cite{kinnunenl08} every bounded $m$-supercaloric function is a weak supersolution.
In particular, they belong to the natural Sobolev space.
Thus if we only consider bounded functions then the classes of $m$-supercaloric functions and weak supersolutions coincide.
The advantage of weak supersolutions is that they satisfy expedient Caccioppoli and Harnack estimates.
The class of non-negative $m$-supercaloric functions is even more flexible. 
For example, it is closed under increasing convergence, if the limit function is finite on a dense set.
It is closed under taking minimum of finitely many $m$-supercaloric functions.
Moreover, a non-negative  $m$-supercaloric function may be redefined to be zero until a given moment of time. 
These properties will be useful for us later.

The examples below show that $m$-supercaloric functions are by no means innocent. 
Let $q_1,q_2,\dots$ be the points with rational coordinates in $\R^n$. 
The stationary function
\[
u(x,t)
=\left(\sum_{i=1}^\infty\frac{a_i}{|x-q_i|^{n-2}}\right)^{\frac1m},
\quad n\ge3,
\]
is a weak supersolution in $\R^{n+1}$, provided that the coefficients $a_i>0$ are chosen properly.
Now $u(q_i,t)\equiv\infty$, $i=1,2,\dots$. Nevertheless, $u^m\in L^2_{\loc}(\R^n;W^{1,2}_{\loc}(\R^n))$.

In general, the class of $m$-supercaloric functions is wider: it contains important functions that fail to be weak supersolutions.
The two important examples for us are:
\begin{itemize}
\item The Barenblatt solution
\begin{equation}\label{barenblatt}
\B(x,t)=
\begin{cases}
\displaystyle t^{-\lambda}\left(C-\frac{\lambda(m-1)}{2mn}\frac{\abs{x}^2}{t^{\frac{2\lambda}n}}\right)_+^{\frac1{m-1}}, \quad t>0,\\
0,\quad t\le 0,  
\end{cases}
\end{equation}
where $m>1$, $\lambda= \frac n {n(m-1)+2}$ and  $C>0$.
This function is $m$-supercaloric in $\R^{n+1}$, but it is not a weak supersolution in any domain that contains the origin, since
\[
\int_{-1}^1\int_{|x|<r}|\nabla(\B(x,t)^m)|^2\,dx\,dt=\infty.
\]
However, it is a weak solution in $\R^{n+1}\setminus\{0\}$. 
Moreover, $\B\in L^q_{\loc}(\R^n\times\R)$ whenever $q<m+\frac2n$, the weak gradient exists and $\nabla(\B^m)\in L^q_{\loc}(\R^n\times\R)$ for every $q<1+ \frac 1{1+mn}$, see \cite{kinnunenl08}. 
Furthermore, 
\[
\B_t - \Delta (\B^m) = C\delta,
\]
where $\delta$ is Dirac's delta and $C=C(m,n)>0$.
\item The friendly giant
\begin{equation}\label{friendly giant}
V(x,t) =
\begin{cases}
\displaystyle\frac{U(x)}{(t-t_0)^{\frac 1 {m-1}}}, \quad t>t_0,\\
0,\quad t\le t_0,
\end{cases}
\end{equation}
where $x\in\Omega$, $\Omega\subset\R^n$ is a bounded domain. 
Here $U>0$ satisfies the auxiliary elliptic equation
\begin{equation}
\label{eq:elliptic-eq}
\Delta (U^m) + \frac 1 {m-1} U = 0
\end{equation}
with the zero boundary values in $\Omega$.
This function is $m$-supercaloric, but it is not a weak supersolution in $\Omega\times \R$.
However, $V$ is a weak solution in $\Omega\times (t_0,\infty)$.
A characteristic feature of the friendly giant is a total blow-up at a time slice
\[
\lim_{{\substack{(y,t) \to (x,t_0)\\t>t_0}}} V(y,t)=\infty 
\quad\text{for every}\quad x\in \Omega.
\]
See  \cite [p. 111--114]{vazquez07}. 
\end{itemize}

Notice that $\B^{m-1}\in L_{\loc}^1(\R^{n+1})$ while $V^{m-1}\notin L^1_{\loc}(\Omega_T)$.
This summability for the pressure is decisive. 
Unbounded $m$-supercaloric functions are divided into two mutually exclusive classes $\mathfrak{B}$ and $\mathfrak{M}$
depending on whether the pressure $\frac{u^{m-1}}{m-1}$ is locally integrable or not.
The following results were outlined in \cite{kinnunen16} and our aim now is to provide complete proofs.
Let $m>1$. 
For an $m$-supercaloric function $u:\Omega_T\to[0,\infty]$ the following conditions are equivalent:

\begin{itemize}
\item {\bf Class $\mathfrak B$}
  \begin{itemize}
  \item[(i)] $u\in L_{\loc}^{m-1}(\Omega_T)$,
  \item[(ii)] $u\in L_{\loc}^q(\Omega_T)$ whenever $q<m+\frac2n$,
\item[(iii)] the Sobolev gradient $\nabla (u^m)$ exists and belongs to $L_{\loc}^q (\Omega_T)$  whenever $q< 1+ \frac 1 {1+mn}$,
 \end{itemize}
The proof is given in Section \ref{sec:B}.
Notice the gap $[m-1,m+\frac2n)$. 
Furthermore, functions of class $\B$ satisfy a measure equation
\[
u_t-\Delta(u^m) = \mu,
\]
where $\mu$  is a non-negative a Radon measure on $\R^{n+1}$.

\item {\bf Class $\mathfrak M$}
\begin{itemize}
\item[(i)]  $u\not \in L_{\loc}^{m-1}(\Omega_T)$,
\item[(ii)]  there exists a time $t_0\in (0,T)$ such that 
\[
\lim_{{\substack{(y,t) \to (x,t_0)\\t>t_0}}} u(y,t)=\infty 
\quad\text{for every}\quad x\in \Omega.
\]
\end{itemize}

The proof is given in Section \ref{sec:M}.
Functions of class $\mathfrak M$ have very few, if any good properties. 
In particular, they do not induce a Radon measure.
 \end{itemize}

Finally we also study the infinity sets, where 
\[
\lim_{{\substack{(y,t) \to (x,t_0)\\t>t_0}}} u(y,t)=\infty.
\]
Throughout we assume that $u>0$, but the assumption $u\ge0$ would be more appropriate because of the moving boundary.
We are mainly interested in $m$-supercaloric functions on sets where they are unbounded.
\section{Preliminaries}

Let $\Omega$ be an open, bounded and connected subset of $\R^n$ and let $0<t_1<t_2<T$.
We denote $\Omega_T=\Omega\times(0,T)$ and $D_{t_1,t_2}=D\times(t_1,t_2)$, where $D\subset\Omega$ is open.
The parabolic boundary of a space-time cylinder $D_{t_1,t_2}$ is $\bd_p D_{t_1,t_2}=(\overline D\times\{t_1\})\cup(\partial D\times[t_1,t_2))$,
that is, it consists of the initial and lateral boundaries. 
$D_{t_1,t_2}\Subset\Omega_T$ denotes that $\overline{D_{t_1,t_2}}$ is a compact subset of $\Omega_T$.

We use $W^{1,p}(\Omega)$, $1\le p<\infty$, to denote the Sobolev space of functions $u\in L^p(\Omega)$, whose weak gradients also belong to $L^p(\Omega)$, with the norm 
\[
\|u\|_{W^{1,p}(\Omega)}=\|u\|_{L^p(\Omega)}+\|\nabla u\|_{L^p(\Omega)}.
\] 
The Sobolev space with zero boundary values $W^{1,p}_0(\Omega)$ is the completion of $C^\infty_0(\Omega)$ with respect to the norm of $W^{1,p}(\Omega)$. Moreover, $u\in W^{1,p}_{\loc}(\Omega)$ if $u\in W^{1,p}(D)$ for every $D\Subset\Omega$.

The parabolic Sobolev space $L^2(0,T; W^{1,2}(\Omega))$ consists of measurable functions $u:\Omega_T\to[-\infty,\infty]$ such that
$x\mapsto u(x,t)$ belongs to $W^{1,2}(\Omega)$ for almost every $t\in(0,T)$ and
\[
\iint_{\Omega_T}(|u|^2+|\nabla u|^2)\,dx\,dt<\infty.
\]
The definition for $L^2(0,T; W^{1,2}_0(\Omega))$ is similar apart from the requirement that 
$x\mapsto u(x,t)$ belongs to $W^{1,2}_0(\Omega)$ for almost every $t\in(0,T)$.
Moreover, we say that $u\in L_{\loc}^2(0,T; W^{1,2}_{\loc}(\Omega))$ 
if $u\in L^2(t_1,t_2; W^{1,2}(D))$ for every $D_{t_1,t_2}\Subset\Omega_T$.

\begin{definition}\label{weak super}
A non-negative function $u$ is a weak supersolution to
\eqref{eq:pme}, if $u^m\in L_{\loc}^2(0,T; W^{1,2}_{\loc}(\Omega))$ and it satisfies \eqref{eq:pme_weak}
for every non-negative test function $\vp\in C^\infty_0(\Omega_T)$. 
Similarly, $u$ is a weak subsolution, if \eqref{eq:pme_weak} holds with the inequality reversed.
Moreover, $u$ is a weak solution to \eqref{eq:pme}, if the integral in \eqref{eq:pme_weak} is zero for every test function $\vp\in C^\infty_0(\Omega_T)$ without the sign restriction.
\end{definition}

In order to obtain appropriate Caccioppoli type energy estimates it is convenient to impose the Sobolev space assumption on $u^{\frac{m+1}2}$ instead of $u^m$ in the definition above. 
According to the recent result in \cite{lehtela17}, this does not make any difference for locally bounded functions.

\begin{theorem}
Assume that $u$  is a locally bounded non-negative function.
\begin{itemize}
\item[(i)] If $u^{\frac{m+1}2}\in L_{\loc}^2(0,T; W^{1,2}_{\loc}(\Omega))$ satisfies \eqref{eq:pme_weak} 
for every non-negative test function $\vp\in C^\infty_0(\Omega_T)$, then  $u^m\in L_{\loc}^2(0,T; W^{1,2}_{\loc}(\Omega)$. 
\item[(ii)] If  $u^m\in L_{\loc}^2(0,T; W^{1,2}_{\loc}(\Omega)$ satisfies \eqref{eq:pme_weak} 
for every non-negative test function $\vp\in C^\infty_0(\Omega_T)$, then $u^{\frac{m+1}2}\in L_{\loc}^2(0,T; W^{1,2}_{\loc}(\Omega))$.
\end{itemize}
\end{theorem}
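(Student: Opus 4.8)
The plan is to treat the two implications by entirely different methods: (i) is a soft statement about Sobolev functions that does not use the differential inequality, while (ii) is where the equation genuinely enters, through a Caccioppoli-type energy estimate.

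For (i), I would write $u^m=\big(u^{\frac{m+1}2}\big)^{\beta}$ with $\beta=\frac{2m}{m+1}$. Since $m>1$ we have $1<\beta<2$, so $s\mapsto s^{\beta}$ is $C^1$ on $[0,\infty)$ with derivative $\beta s^{\beta-1}$, which is continuous and bounded on every bounded interval because the exponent $\beta-1=\frac{m-1}{m+1}$ is positive; in particular it is Lipschitz on the range of the locally bounded function $u^{\frac{m+1}2}$ over any $D_{t_1,t_2}\Subset\Omega_T$. The chain rule for Sobolev functions then gives, slicewise in $t$ and locally in $x$, that $u^m\in W^{1,2}_{\loc}$ with $\nabla(u^m)=\frac{2m}{m+1}\,u^{\frac{m-1}2}\,\nabla\big(u^{\frac{m+1}2}\big)$; as $u^m$ is locally bounded and this gradient lies in $L^2_{\loc}$, we obtain $u^m\in L^2_{\loc}(0,T;W^{1,2}_{\loc}(\Omega))$. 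Since \eqref{eq:pme_weak} is literally the same inequality, $u$ is then a weak supersolution in the sense of Definition \ref{weak super}, and no use of the equation was needed.

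For (ii) the roles are reversed: $u^{\frac{m+1}2}=\big(u^m\big)^{\gamma}$ with $\gamma=\frac{m+1}{2m}\in(\tfrac12,1)$, and $s\mapsto s^{\gamma}$ has an unbounded derivative at the origin, so the chain rule is unavailable --- a bounded $W^{1,2}$ function need not have a $W^{1,2}$ root --- and the missing integrability must be recovered from the equation. I would fix $D_{t_1,t_2}\Subset\Omega_T$ and $\zeta\in C^\infty_0(D_{t_1,t_2})$ with $0\le\zeta\le1$, and for $0<\delta\le1$ set $g_\delta(s)=(s+\delta)^{1/m}-\delta^{1/m}$, so that $g_\delta\ge0$, $g_\delta(0)=0$, $g_\delta(u^m)\le u$, $g_\delta'\ge0$ is bounded, and $g_\delta'(u^m)\uparrow\frac1m u^{1-m}$ on $\{u>0\}$ as $\delta\downarrow0$. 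Using $\varphi=g_\delta(u^m)\zeta^2$ --- a non-negative, locally bounded function that lies in $W^{1,2}$ in the spatial variable and has compact support in $D_{t_1,t_2}$ --- as a test function in the Steklov-averaged form of \eqref{eq:pme_weak}, the elliptic part produces
\[
\iint g_\delta'(u^m)\,|\nabla(u^m)|^2\,\zeta^2\,dx\,dt
+\iint g_\delta(u^m)\,\nabla(u^m)\cdot\nabla(\zeta^2)\,dx\,dt ,
\]
where, with $w_\delta:=(u^m+\delta)^{\frac{m+1}{2m}}-\delta^{\frac{m+1}{2m}}\in W^{1,2}_{\loc}$, the first integral equals $\frac{4m}{(m+1)^2}\iint|\nabla w_\delta|^2\zeta^2$ and the second has absolute value at most $\|u\|_{L^\infty(\spt\zeta)}\,\|\nabla(u^m)\|_{L^2(\spt\zeta)}\,\|\nabla(\zeta^2)\|_{L^2}$, uniformly in $\delta$. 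The parabolic part, after an integration by parts in time, contributes $-\iint\Gamma_\delta(u)\,\partial_t(\zeta^2)\,dx\,dt$ with $\Gamma_\delta'(s)=g_\delta(s^m)$, so that $0\le\Gamma_\delta(s)\le\tfrac12 s^2$ and this term is bounded on the range of $u$ uniformly in $\delta$. Combining these, $\iint|\nabla w_\delta|^2\zeta^2$ is bounded by a constant depending only on $m,n$, on $\|u\|_{L^\infty}$ and $\|\nabla(u^m)\|_{L^2}$ near $\spt\zeta$, and on $\zeta$, but not on $\delta$. Then I would let $\delta\downarrow0$: using $\nabla(u^m)=0$ a.e. on $\{u=0\}$ one gets $|\nabla w_\delta|^2\uparrow\big(\tfrac{m+1}{2m}\big)^2 u^{1-m}|\nabla(u^m)|^2$ a.e. and $w_\delta\to u^{\frac{m+1}2}$ in $L^1_{\loc}$, so monotone convergence yields $\iint|\nabla(u^{\frac{m+1}2})|^2\zeta^2<\infty$, while weak $L^2_{\loc}$-compactness of $\{\nabla w_\delta\}$ together with the a.e. limit identifies $\nabla\big(u^{\frac{m+1}2}\big)=\frac{m+1}{2m}\,u^{\frac{1-m}2}\,\nabla(u^m)$ as the distributional gradient. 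As $u^{\frac{m+1}2}$ is locally bounded, this gives $u^{\frac{m+1}2}\in L^2_{\loc}(0,T;W^{1,2}_{\loc}(\Omega))$, and the bound obtained along the way is a Caccioppoli estimate.

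The step I expect to be the main obstacle is the parabolic term. Because $u$ is only assumed to be a weak supersolution, it has a priori no time derivative and need not even belong to $W^{1,2}_{\loc}$, so $g_\delta(u^m)\zeta^2$ has to be inserted through Steklov averages $[\,\cdot\,]_h$ and the time integral $\iint\partial_t[u]_h\,g_\delta([u^m]_h)\,\zeta^2$ must be passed to the limit $h\to0$ despite the fact that Steklov averaging does not commute with $s\mapsto s^m$, so $[u]_h$ and $[u^m]_h$ do not coincide. Controlling this defect --- using the convexity of $s\mapsto s^m$, the monotonicity of $g_\delta$, and, crucially, the local boundedness of $u$ so that all error terms are dominated --- is the technical heart of the argument, and is precisely the point carried out in \cite{lehtela17}.
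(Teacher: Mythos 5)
The paper itself gives no proof of this theorem: it is quoted from \cite{lehtela17}, so your argument can only be judged on its own terms. Part (i) is correct, and your observation that the differential inequality is irrelevant there is accurate: since $\tfrac{2m}{m+1}>1$, the map $s\mapsto s^{2m/(m+1)}$ is Lipschitz on bounded intervals, so for locally bounded $u$ the chain rule yields $\nabla(u^m)=\tfrac{2m}{m+1}u^{\frac{m-1}{2}}\nabla(u^{\frac{m+1}{2}})\in L^2_{\loc}$, which is exactly the interpretation recorded in the Remark following the theorem.

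Part (ii), however, fails at the decisive step. Your test function $\varphi=g_\delta(u^m)\zeta^2$ is a \emph{non-decreasing} function of $u$ times $\zeta^2$, while \eqref{eq:pme_weak} is the \emph{super}solution inequality. Inserting $\varphi$ gives
\[
\frac{4m}{(m+1)^2}\iint_{\Omega_T}|\nabla w_\delta|^2\zeta^2\,dx\,dt
\;\ge\;\iint_{\Omega_T}u\,\partial_t\varphi\,dx\,dt-\iint_{\Omega_T}g_\delta(u^m)\,\nabla(u^m)\cdot\nabla(\zeta^2)\,dx\,dt,
\]
that is, only a \emph{lower} bound on the energy term: the quantity you want to control sits on the large side of the inequality, so no Caccioppoli estimate follows, however carefully the error terms and the Steklov averages are handled, and the monotone convergence step at the end has no finite majorant. (Your scheme would work verbatim for subsolutions or solutions.) For supersolutions the composition must be non-increasing in $u$ --- this is precisely why Lemma \ref{caccioppoli} tests with $u^{-\eps}\zeta^2$. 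The repair, which exploits the local boundedness and is in essence what \cite{lehtela17} does, is to take $\lambda\ge\esssup_{\spt\zeta}u$ and test with $\varphi=(\lambda-u_\delta)_+\zeta^2$, where $u_\delta=\max\{u^m,\delta^m\}^{1/m}$; the regularization is needed because $u$ itself is not known to be a Sobolev function, while $s\mapsto\max\{s,\delta^m\}^{1/m}$ is Lipschitz. Then the gradient term enters with the favourable sign, and \eqref{eq:pme_weak}, with the time term treated by the Steklov-average argument you describe applied to the primitive of $s\mapsto(\lambda-\max\{s,\delta\})_+$ (which is bounded by $\lambda^2/2$), yields
\[
\frac1m\iint_{\{\delta<u<\lambda\}}u^{1-m}\,|\nabla(u^m)|^2\,\zeta^2\,dx\,dt
\;\le\;\lambda\iint_{\Omega_T}|\nabla(u^m)|\,|\nabla(\zeta^2)|\,dx\,dt
+\frac{\lambda^2}{2}\iint_{\Omega_T}|\partial_t(\zeta^2)|\,dx\,dt
\]
uniformly in $\delta$. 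Since $u^{1-m}|\nabla(u^m)|^2=\tfrac{4m^2}{(m+1)^2}|\nabla(u^{\frac{m+1}{2}})|^2$ where $u>0$ and $\nabla(u^m)=0$ a.e.\ on $\{u=0\}$, letting $\delta\to0$ gives the desired conclusion, with the gradient identified as in the Remark; the limiting steps you proposed (monotone convergence plus weak compactness) can then be reused without change.
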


\begin{remark}
Under the assumption $u^{\frac{m+1}2}\in L_{\loc}^2(0,T; W^{1,2}_{\loc}(\Omega))$ the gradient in \eqref{eq:pme_weak} is interpreted as
\[
\nabla (u^m)=\frac{2m}{m+1}u^{\frac{m-1}2}\nabla (u^{\frac{m+1}2}).
\]
\end{remark}

We point out that it we may restrict ourselves to bounded weak supersolutions, since we consider the truncations 
\[
u_j=\min\{u,j\}, \quad j=1,2,\dots.
\] 
In addition, we assume that $u$ is positive throughout, since we are interested in sets where functions are large.

We shall investigate several aspects related to unbounded $m$-supercaloric functions.
Harnack type estimates with intrinsic geometry play a fundamental role in our study.
Positive weak solutions to the porous medium equation satisfy the following intrinsic Harnack inequality, see \cite[Theorem 3]{dibenedetto88}, \cite{dibenedetto12}, \cite{daskalopoulosk07} and \cite{wu01}.

\begin{lemma}[Harnack]\label{harnack sol}
Assume that $u$ is a positive weak solution to \eqref{eq:pme} in $\Omega_T$. 
Then there exist constants $C_1$ and $C_2$, depending on $n$ and $m$, such that
\[
u(x_0,t_0)\le C_1 \inf_{x\in B(x_0,r)} u(x, t_0+\theta),
\] 
where 
\[
\theta = \frac{C_2 \rho^2}{u(x_0,t_0)^{m-1}}
\]
is such that $B(x_0,2r)\times (t_0-2\theta, t_0+2\theta)\subset\Omega_T$. 
\end{lemma}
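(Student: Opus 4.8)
Since this is the classical intrinsic Harnack inequality of DiBenedetto (see \cite{dibenedetto88}, \cite{dibenedetto12}, \cite{daskalopoulosk07}, \cite{wu01}), the plan is to follow his argument, whose backbone is \emph{intrinsic scaling}. Because the diffusivity $mu^{m-1}$ degenerates where $u$ is small, one cannot hope for a scale-invariant Harnack estimate; however, in a cylinder where $u\approx M:=u(x_0,t_0)$ the equation behaves like a uniformly parabolic one provided time is measured in units of $M^{1-m}$, which is precisely why the waiting time $\theta$ must carry the factor $u(x_0,t_0)^{1-m}$. After a translation I may assume $(x_0,t_0)=(0,0)$, and after the change of variables $(x,t)\mapsto(\rho x,\rho^2M^{1-m}t)$ together with $u\mapsto u/M$ the assertion reduces to the following: if $v$ is a positive weak solution in a fixed intrinsic cylinder with $v(0,0)=1$, then $\inf_{B_1}v(\cdot,\sigma)\ge c$ for a definite $\sigma>0$ and $c=c(n,m)>0$.

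The argument rests on two ingredients. First, \emph{local boundedness} gives sup-bounds for $v$ over intrinsic subcylinders in terms of integral norms; combined with the \emph{continuity} of weak solutions (De~Giorgi type oscillation decay in intrinsically scaled cylinders), the normalization $v(0,0)=1$ produces a small intrinsic cylinder around the origin on which $v\ge\tfrac12$, so that the superlevel set $\{v(\cdot,s)\ge\tfrac12\}$ occupies a fixed fraction of a ball $B_\rho$ for all $s$ in a short time interval. The second and decisive ingredient is the \emph{expansion of positivity}: if $\abs{\{v(\cdot,\bar s)\ge\ell\}\cap B_\rho}\ge\alpha\abs{B_\rho}$, then $v\ge\eta\ell$ on $B_{2\rho}$ at a later time $\bar s+\delta\rho^2\ell^{1-m}$, with $\eta,\delta$ depending only on $n,m,\alpha$. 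I would prove this by combining a logarithmic (Moser type) estimate, which spreads positivity in space after a short intrinsic wait, with a De~Giorgi iteration converting the resulting measure-theoretic lower bound into a pointwise one.

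Chaining these: starting from the fixed-fraction superlevel set near the origin, one application of expansion of positivity yields a genuine lower bound $v\ge c_1$ on $B_{2\rho}$ at a controlled later time; finitely many further applications (a Harnack chain whose length depends only on $n$) enlarge the ball to $B_1$ while the time advances by a controlled amount, giving $\inf_{B_1}v(\cdot,\sigma)\ge c$ with $\sigma=\sigma(n,m)$. Undoing the scaling reinstates the factor $u(x_0,t_0)^{1-m}$ in $\theta$ and produces constants $C_1,C_2$ depending only on $n$ and $m$; the hypothesis $B(x_0,2r)\times(t_0-2\theta,t_0+2\theta)\subset\Omega_T$ guarantees there is enough room for the entire chain.

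I expect the expansion of positivity step to be the main obstacle, and within it the bookkeeping of the intrinsic time increments: each application advances time by an amount measured in the \emph{current} (and shrinking) level of $v$, so one must check that the finitely many increments sum to a definite multiple of $\rho^2$ and do not degenerate as the level drops — this is where the precise exponents of the porous medium equation intervene and where the degeneracy $m>1$, rather than the singular range $m<1$, makes the iteration terminate with constants depending only on $n$ and $m$. A secondary technical nuisance is that all Caccioppoli and logarithmic estimates must be carried out with test functions adapted to the intrinsically scaled cylinders; alternatively, one simply invokes the intrinsic Harnack inequality from \cite{dibenedetto88}, which is what we do here.
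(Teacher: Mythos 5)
Your resolution matches the paper exactly: the lemma is not proved there either, but quoted as DiBenedetto's intrinsic Harnack inequality with references to \cite{dibenedetto88}, \cite{dibenedetto12}, \cite{daskalopoulosk07} and \cite{wu01}, which is precisely what you do in your final sentence. Your preceding sketch (intrinsic scaling, local boundedness plus continuity, expansion of positivity and a Harnack chain) is a faithful outline of the cited proof, so nothing further is needed.
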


For locally bounded positive weak supersolutions, we have the corresponding weak Harnack estimate, see \cite[Theorem 17.1, p. 133]{dibenedetto12} and \cite{lehtela16}.

\begin{lemma}[Weak Harnack]\label{Harnack}
Assume that $u$ is a locally bounded positive weak supersolution to \eqref{eq:pme} in $\Omega_T$ and let $B(x_0,8r) \times (0,T)\subset\Omega_T$. 
Then there exist constants $C_1$ and $C_2$, depending only on $m$ and $n$, such that for almost every $t_0\in (0,T)$, we have
\[
\vint_{B(x_0,r)} u(x,t_0)\, dx \le \left( \frac {C_1r^2}{T-t_0} \right)^{\frac1{m-1}} + C_2 \essinf_Q u,
\]
where $Q=B(x_0,4r)\times (t_0+\frac\theta2,t_0+\theta)$ with
\[
\theta= \min \left\{ T-t_0, C_1r^2 \left( \vint_{B(x_0,r)} u(x,t_0) \, dx \right)^{-(m-1)} \right\}.
\]
\end{lemma}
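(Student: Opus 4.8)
The plan is to use the intrinsic scaling of \eqref{eq:pme} to reduce the estimate to a model situation with normalised data, and then to run the De Giorgi measure--shrinking estimates together with the expansion of positivity for non-negative supersolutions of the porous medium equation; the energy (Caccioppoli), logarithmic and expansion-of-positivity estimates are available from \cite{dibenedetto12} and \cite{lehtela16}, and I would quote these rather than reprove them. For the \emph{reduction}, put $\bar u:=\vint_{B(x_0,r)}u(x,t_0)\,dx$. If $\bar u\le(C_1r^2/(T-t_0))^{1/(m-1)}$ there is nothing to prove, so assume the reverse inequality; then $C_1r^2\bar u^{1-m}<T-t_0$, so $\theta=C_1r^2\bar u^{1-m}$, $Q=B(x_0,4r)\times(t_0+\theta/2,t_0+\theta)$, and it suffices to find $C_2=C_2(m,n)$ with $\bar u\le C_2\essinf_Q u$. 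Since $B(x_0,8r)\times(0,T)\subset\Omega_T$ and $t_0+\theta<T$, the cylinder $B(x_0,8r)\times(t_0,t_0+\theta)$ lies in $\Omega_T$; setting $v(y,s):=\bar u^{-1}u(x_0+ry,t_0+r^2\bar u^{1-m}s)$, a direct computation shows that $v$ is again a positive, locally bounded weak supersolution of \eqref{eq:pme}, now on $B(0,8)\times(0,C_1)$, with $\vint_{B(0,1)}v(y,0)\,dy=1$, and the target estimate becomes $\essinf_{B(0,4)\times(C_1/2,C_1)}v\ge c(m,n)>0$, with $C_1=C_1(m,n)$ at our disposal.

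\emph{Step 1: a measure estimate at an intermediate time.} One converts the $L^1$-datum into a lower bound for the measure of a super-level set. Testing the weak formulation against a time-independent cut-off equal to $1$ on $B(0,1)$ and supported in $B(0,2)$, and integrating by parts in the spatial variable, yields
\[
\int_{B(0,2)}v(\cdot,s)\eta\,dy\ge\int_{B(0,1)}v(\cdot,0)\,dy-\int_0^s\!\!\int_{B(0,2)}v^m\,|\Delta\eta|\,dy\,d\sigma,
\]
and controlling the nonlinear loss term on a short time interval by means of the local $L^1$--$L^\infty$ bound for supersolutions one obtains a time $\bar s$ with $0<\bar s<C_1/4$ and $\int_{B(0,2)}v(\cdot,\bar s)\,dy\ge\tfrac12|B(0,1)|$. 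Splitting this mass over $\{v<\delta\}$, $\{\delta\le v<M\}$ and $\{v\ge M\}$, using the upper bound from the same $L^1$--$L^\infty$ estimate and the logarithmic estimate, one arrives at constants $\delta,\alpha\in(0,1)$, depending only on $m$ and $n$, with
\[
|\{y\in B(0,2):v(y,\bar s)\ge\delta\}|\ge\alpha\,|B(0,2)|.
\]

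\emph{Step 2: expansion of positivity.} From such a bound at the time slice $\bar s$, the expansion of positivity for supersolutions of \eqref{eq:pme} produces $\eta_0,b\in(0,1)$, depending only on $m,n,\alpha$, with $v\ge\eta_0\delta$ a.e.\ on $B(0,4)$ at time $\bar s+b\,\delta^{1-m}$. Iterating this a bounded number of times and bookkeeping the (intrinsic) waiting times one propagates the lower bound forward so that $v\ge c(m,n)$ a.e.\ on $B(0,4)\times(C_1/2,C_1)$; this fixes $C_1=C_1(m,n)$, chosen large enough that the finitely many waiting times fit inside $(0,C_1)$. Scaling back gives $\bar u\le C_2\essinf_Q u$, as required.

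The crux is Step 1: weak supersolutions carry no quantitative a priori upper bound, so the nonlinear term $\int v^m|\Delta\eta|$ and the super-level splitting have to be tamed by coupling the mass computation with the local $L^1$--$L^\infty$ smoothing estimate and with the choice of the intrinsic time scale $\theta$. Every estimate in both steps must moreover be invariant under the rescaling $u\mapsto\bar u^{-1}u$, $x\mapsto r^{-1}x$, $t\mapsto(r^2\bar u^{1-m})^{-1}t$. These technical facts, together with the expansion-of-positivity lemma of Step 2 (itself a nontrivial De Giorgi iteration), are exactly what is assembled in \cite{dibenedetto12,lehtela16}.
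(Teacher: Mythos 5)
The paper does not prove this lemma at all: it is quoted as a known result, with the proof delegated to \cite[Theorem 17.1]{dibenedetto12} and \cite{lehtela16}. So the relevant comparison is between your sketch and the arguments in those references. Your overall architecture --- intrinsic rescaling $v(y,s)=\bar u^{-1}u(x_0+ry,t_0+r^2\bar u^{1-m}s)$ after dispensing with the trivial alternative, persistence of the normalised mass via a spatial cut-off, a measure estimate for a super-level set at a nearby time, and then expansion of positivity iterated over intrinsic waiting times --- is indeed the standard route taken there, and the scaling computation and the reduction to $\essinf_{B(0,4)\times(C_1/2,C_1)}v\ge c(m,n)$ are correct.

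The genuine gap is in Step 1, where you control both the loss term $\int_0^s\!\int v^m|\Delta\eta|$ and the splitting over $\{v\ge M\}$ by a ``local $L^1$--$L^\infty$ bound for supersolutions''. No such bound exists: the smoothing estimate $L^1_{\loc}\to L^\infty_{\loc}$ holds for (sub)solutions, but a weak supersolution satisfies no upper bound in terms of its $L^1$ data (a supersolution may jump up to an arbitrarily large constant immediately after $t_0$, and the hypothesis that $u$ is locally bounded carries no quantitative information, so no constant depending only on $m$ and $n$ can come out of it). This is precisely the hard point of the weak Harnack inequality for supersolutions, and it is handled in \cite{dibenedetto12} and \cite{lehtela16} by purely integral means: Caccioppoli estimates with negative exponents and the parabolic Sobolev inequality (the analogues of Lemmas \ref{caccioppoli}--\ref{parabolic sobolev} and of the reverse H\"older estimates behind Remark \ref{B properties}) give quantitative bounds on $\sup_t\int_{B}v\,dy$ and on $\iint v^m\,dy\,ds$ over the intrinsic time scale in terms of the initial mass, and the measure estimate for $\{v(\cdot,\bar s)\ge\delta\}$ is then obtained from an $L^q$ bound with some $q>1$ via H\"older/Chebyshev, not from an $L^\infty$ bound; the choice of the intrinsic length $\theta\sim r^2\bar u^{1-m}$ is exactly what makes the loss term absorbable. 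As written, your Step 1 invokes a false tool and so does not close; either carry out the integral-estimate version of that step, or do what the paper does and simply cite \cite[Theorem 17.1]{dibenedetto12} and \cite{lehtela16} for the whole statement. (Step 2, the expansion of positivity and the bookkeeping of waiting times with $C_1=C_1(m,n)$ chosen large, is fine as a sketch, modulo the fact that the Lebesgue-instant issue behind ``for almost every $t_0$'' should be acknowledged when you normalise $\vint_{B(0,1)}v(y,0)\,dy=1$.)
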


We shall discuss several results related to local integrability of $m$-supercaloric functions.
Caccioppoli type energy estimates allow us to derive estimates for local integrability of the gradient in terms of the 
local integrability of a supersolution.

\begin{lemma}[Caccioppoli]\label{caccioppoli}
Assume that $u$ is a locally bounded positive weak supersolution to \eqref{eq:pme}  in $\Omega_T$ and let $\zeta\in C_0^\infty(\Omega_T)$ be a cut-off function such that $0\le \zeta \le 1$. Then there exist numerical constants $C_1$ and $C_2$ such that
  \begin{align*}
 &\iint_{\Omega_T} m u^{m-\eps-2} \zeta^2 |\nabla u|^2 \, dx \, dt + \frac 1 {\eps | 1-\eps|} \esssup_{t\in(0,T)} \int_\Omega u^{1-\eps} \zeta^2 \, dx \\
& \le \frac {C_1 m}{\eps^2} \iint_{\Omega_T} u^{m-\eps} |\nabla \zeta|^2 \, dx \, dt + \frac{C_2}{\eps |1-\eps|} 
\iint_{\Omega_T} u^{1-\eps} \zeta |\zeta_t| \, dx \, dt
  \end{align*}
for every $\eps>0$, $\eps\ne 1$.
\end{lemma}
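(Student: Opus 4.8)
The plan is to test the weak formulation \eqref{eq:pme_weak} with the energy test function that carries a negative power of $u$, namely $\vp=u^{-\eps}\zeta^{2}$, localized also in time, and then to keep careful track of the coefficients in $\eps$ and in $1-\eps$. Since the hypothesis is that $u$ is a \emph{locally bounded positive} weak supersolution, I would first make the harmless reduction to the case where on the compact set $\spt\zeta\Subset\Omega_T$ one has $0<c\le u\le M<\infty$: the upper bound is the local boundedness assumption, and a positive lower bound follows since a weak supersolution has a lower semicontinuous representative which, being everywhere positive, attains a positive minimum on $\spt\zeta$. After this reduction $\vp$ is a bounded function, $\nabla u\in L^{2}_{\loc}$ (because $\nabla(u^{m})=mu^{m-1}\nabla u$ and $u^{m-1}$ is bounded from above and below on $\spt\zeta$), and every integral in the claimed inequality is finite; in particular there is nothing to prove if its right-hand side is infinite.

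Substituting $\vp=u^{-\eps}\zeta^{2}$ into \eqref{eq:pme_weak} and using $\nabla(u^{m})=mu^{m-1}\nabla u$, the elliptic term splits into the good term $-m\eps\,u^{m-\eps-2}\zeta^{2}|\nabla u|^{2}$ and the cross term $2m\,u^{m-\eps-1}\zeta\,\nabla u\cdot\nabla\zeta$, while the parabolic term becomes
\[
-u\,\partial_{t}(u^{-\eps}\zeta^{2})=\frac{\eps}{1-\eps}\,\partial_{t}\bigl(u^{1-\eps}\bigr)\zeta^{2}-u^{1-\eps}\,\partial_{t}(\zeta^{2}).
\]
To produce the essential supremum I would integrate over $\Omega\times(\tau,T)$ for an arbitrary time level $\tau$ (equivalently, multiply $\vp$ by a Lipschitz approximation of the characteristic function of $(\tau,T)$) and integrate the first parabolic term by parts in $t$; since $\zeta\in C_{0}^{\infty}(\Omega_T)$ the boundary term at $t=T$ vanishes, leaving $-\tfrac{\eps}{1-\eps}\int_{\Omega}u^{1-\eps}(\cdot,\tau)\zeta^{2}(\cdot,\tau)\,dx$ and a further term $\tfrac{\eps}{1-\eps}\iint u^{1-\eps}\partial_{t}(\zeta^{2})$. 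Collecting the terms with the right sign on the left produces, for every such $\tau$, an estimate of the form
\[
\frac{\eps}{|1-\eps|}\int_{\Omega}u^{1-\eps}(\cdot,\tau)\zeta^{2}(\cdot,\tau)\,dx+m\eps\iint_{\Omega\times(\tau,T)}u^{m-\eps-2}\zeta^{2}|\nabla u|^{2}\,dx\,dt\le (\text{cross term})+(\text{terms in }u^{1-\eps}\zeta|\zeta_{t}|),
\]
where the sign of $1-\eps$ is exactly what places the first term on the left in both cases $\eps<1$ and $\eps>1$.

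I would then dispose of the cross term by Young's inequality,
\[
2m\,u^{m-\eps-1}\zeta|\nabla u||\nabla\zeta|\le \delta m\,u^{m-\eps-2}\zeta^{2}|\nabla u|^{2}+\delta^{-1}m\,u^{m-\eps}|\nabla\zeta|^{2},
\]
with $\delta$ a suitable small multiple of $\eps$, absorb the first term into the good term, and collect the resulting $\tfrac{m}{\eps^{2}}u^{m-\eps}|\nabla\zeta|^{2}$ contribution; the terms in $u^{1-\eps}\zeta|\zeta_{t}|$ are handled using $|\partial_{t}(\zeta^{2})|\le2\zeta|\zeta_{t}|$. Taking the supremum over $\tau$ in the first term, letting $\tau\to0^{+}$ in the gradient integral, adding the two resulting inequalities (at the cost of a fixed numerical factor), and finally rescaling so that the gradient term carries the coefficient $m$, one arrives at an energy inequality of the asserted shape, from which the numerical constants $C_{1},C_{2}$ can be read off.

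The only genuine difficulty is technical: a weak supersolution need not have a time derivative, so $\partial_{t}(u^{1-\eps})$ has no pointwise meaning and $\vp=u^{-\eps}\zeta^{2}$ is not literally admissible. I would pass to the Steklov average $u_{h}(x,t)=\tfrac1h\int_{t}^{t+h}u(x,s)\,ds$: the weak formulation is equivalent to
\[
\int_{\Omega}\bigl(\partial_{t}u_{h}\,\phi+\nabla\bigl((u^{m})_{h}\bigr)\cdot\nabla\phi\bigr)\,dx\ge0
\]
for a.e.\ $t$ and every non-negative $\phi\in W^{1,2}_{0}(\Omega)$, and for $u_{h}$, which is absolutely continuous in $t$, the chain rule $\partial_{t}u_{h}\cdot u_{h}^{-\eps}=\tfrac{1}{1-\eps}\partial_{t}(u_{h}^{1-\eps})$ is legitimate. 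Carrying out the computation above with $\phi=u_{h}^{-\eps}\zeta^{2}(\cdot,t)$ and letting $h\to0$ — using $u_{h}\to u$ in $L^{q}_{\loc}$, $\nabla((u^{m})_{h})\to\nabla(u^{m})$ in $L^{2}_{\loc}$, and lower semicontinuity of the $L^{2}$ norm (Fatou's lemma) for the good term — transfers the inequality to $u$ itself.
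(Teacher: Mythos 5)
Your overall strategy is exactly the one the paper uses (its proof consists of the test function $\vp=u^{-\eps}\zeta^2$ plus ``technical smoothing and dampening arguments'', with details delegated to the cited reference), and your reduction to $0<c\le u\le M$ on $\spt\zeta$ and the Steklov-average justification are fine. There is, however, a genuine gap in how you extract the essential supremum: you fix the time interval $(\tau,T)$ and assert that ``the sign of $1-\eps$ is exactly what places the first term on the left in both cases $\eps<1$ and $\eps>1$''. That is not so. Carrying out your own computation in the Steklov form (where the time term is $\tfrac{1}{1-\eps}\partial_t(u_h^{1-\eps})\zeta^2$) and integrating over $(\tau,T)$ gives, after rearranging and letting $h\to0$,
\[
m\eps\iint_{\Omega\times(\tau,T)}u^{m-\eps-2}\zeta^2|\nabla u|^2\,dx\,dt
+\frac{1}{1-\eps}\int_\Omega u^{1-\eps}\zeta^2(\cdot,\tau)\,dx
\le \text{(cross term)}+\text{(terms in }u^{1-\eps}\zeta|\zeta_t|\text{)},
\]
so for $\eps>1$ the boundary term at $\tau$ carries a \emph{negative} coefficient: it cannot be kept on the left, and moving it to the right leaves it uncontrolled (estimating it via the lower bound of $u$ on $\spt\zeta$ would ruin the numerical constants, and in any case the $\esssup$ term would never appear on the left). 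Since the lemma is stated for all $\eps>0$, $\eps\ne1$, and the paper applies it with $\eps=m-s_j>1$ in the iteration, this case matters. The fix is to let the orientation of the time cut-off depend on $\sgn(1-\eps)$: for $\eps<1$ integrate over $(\tau,T)$ (favorable boundary term at $\tau$, the one at $T$ vanishes with $\zeta$), while for $\eps>1$ integrate over $(0,\tau)$, so the favorable term sits at the upper endpoint and the lower one vanishes with $\zeta$. This is precisely the two-sided choice the paper spells out in the analogous logarithmic estimate, Lemma \ref{caccioppoli2}: one may ``first take supremum over $t_1$ and then let $t_2\to T$ or first take supremum over $t_2$ and then let $t_1\to0$''.

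A smaller point: when you multiply by the Lipschitz approximation of $\chi_{(\tau,T)}$ you drop the boundary contribution produced when $\partial_t$ hits the cut-off itself; including it changes the coefficient of the boundary term from your $\tfrac{\eps}{1-\eps}$ to $\tfrac{1}{1-\eps}$, which is also what your own Steklov chain rule $\partial_t u_h\,u_h^{-\eps}=\tfrac{1}{1-\eps}\partial_t(u_h^{1-\eps})$ gives in the final paragraph. This inconsistency only affects constants (and in fact the correct $\tfrac{1}{1-\eps}$, after dividing by $\eps$, matches the stated $\tfrac{1}{\eps|1-\eps|}$), but it signals that the time terms need more careful tracking; the $\eps>1$ orientation issue above is the step that actually breaks the claimed inequality as you argued it.
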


\begin{proof}
The Caccioppoli estimate follows by choosing the test function $\vp=u^{-\eps}\zeta^2$ combined with technical smoothing and dampening arguments. 
For a detailed proof, we refer to \cite[Lemma 2.4]{lehtela16}. 
\end{proof}

In the case $\eps=1$, the Caccioppoli estimate takes the following logarithmic form.

\begin{lemma}\label{caccioppoli2}
Assume that $u$ is a locally bounded positive weak supersolution to \eqref{eq:pme} in $\Omega_T$ and let $\zeta\in C_0^\infty(\Omega_T)$ be a cut-off function 
such that $0\le \zeta \le 1$. Then there exist numerical constants $C_1$ and $C_2$ such that
  \begin{align*}
 &  \iint_{\Omega_T}  m u^{m-3} \zeta^2 |\nabla u|^2 \, dx \, dt +  \esssup_{t\in(0,T)} \left |\int_\Omega \zeta^2 \log u \, dx \right | \\
& \le  C_1 m \iint_{\Omega_T}  u^{m-1} |\nabla \zeta|^2 \, dx \, dt + C_2 \iint_{\Omega_T} \zeta |\zeta_t||\log u| \, dx \, dt. 
  \end{align*}  
\end{lemma}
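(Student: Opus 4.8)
The plan is to test the weak inequality \eqref{eq:pme_weak} with the function
\[
\varphi=u^{-1}\zeta^{2}\eta ,
\]
where $\eta=\eta(t)$ is a Lipschitz cut-off in time that will be specialised in three ways, and to read off the two terms on the left-hand side from these choices. By the truncation and Steklov-mollification device underlying Lemma~\ref{caccioppoli} (see \cite[Lemma~2.4]{lehtela16}) this $\varphi$ is admissible. Formally $-u\varphi_{t}=\partial_{t}(\log u)\,\zeta^{2}\eta-\partial_{t}(\zeta^{2}\eta)$ and $\nabla\varphi=\bigl(-u^{-2}\zeta^{2}\nabla u+2u^{-1}\zeta\nabla\zeta\bigr)\eta$, so that, using $\nabla(u^{m})=mu^{m-1}\nabla u$, the vanishing of $\iint_{\Omega_{T}}\partial_{t}(\zeta^{2}\eta)\,dx\,dt$, and integration by parts in $t$, inequality \eqref{eq:pme_weak} becomes
\[
\iint_{\Omega_{T}}m u^{m-3}\zeta^{2}\eta\,|\nabla u|^{2}\,dx\,dt\le\iint_{\Omega_{T}}\Bigl(-\log u\,\partial_{t}(\zeta^{2}\eta)+2m u^{m-2}\zeta\eta\,\nabla u\cdot\nabla\zeta\Bigr)\,dx\,dt .
\]
Young's inequality controls the cross term, $2m u^{m-2}\zeta\,\nabla u\cdot\nabla\zeta\le\tfrac m2 u^{m-3}\zeta^{2}|\nabla u|^{2}+2m u^{m-1}|\nabla\zeta|^{2}$, and $\partial_{t}(\zeta^{2}\eta)=2\zeta\zeta_{t}\eta+\zeta^{2}\eta_{t}$; everything therefore comes down to the boundary contribution $-\iint_{\Omega_{T}}\log u\,\zeta^{2}\eta_{t}\,dx\,dt$.

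Three choices of $\eta$ now finish the proof. With $\eta\equiv1$ on $\spt\zeta$ the term $\eta_{t}$ disappears, and after absorbing the $\tfrac m2$-term into the left-hand side one obtains $\iint_{\Omega_{T}}m u^{m-3}\zeta^{2}|\nabla u|^{2}\,dx\,dt\le C\iint_{\Omega_{T}}\bigl(|\log u|\,\zeta|\zeta_{t}|+m u^{m-1}|\nabla\zeta|^{2}\bigr)\,dx\,dt$, the first term of the lemma. Next, let $\eta$ decrease linearly from $1$ to $0$ on $(\tau,\tau+h)$ and send $h\to0$: the measures $-\eta_{t}\,dt$ converge weakly to $\delta_{\tau}$, so for almost every $\tau\in(0,T)$ the boundary term tends to $\int_{\Omega}\zeta^{2}(x,\tau)\log u(x,\tau)\,dx$. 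Rearranging the inequality so that this integral stands alone, bounding the cross term by Young's inequality, and discarding the remaining nonnegative gradient integral yields
\[
\int_{\Omega}\zeta^{2}(x,\tau)\log u(x,\tau)\,dx\ge -C\iint_{\Omega_{T}}\bigl(|\log u|\,\zeta|\zeta_{t}|+m u^{m-1}|\nabla\zeta|^{2}\bigr)\,dx\,dt .
\]
Taking instead $\eta$ increasing linearly from $0$ to $1$ on $(\tau,\tau+h)$ produces the boundary term $-\int_{\Omega}\zeta^{2}(x,\tau)\log u(x,\tau)\,dx$, hence the matching upper bound. Passing to the essential supremum over $\tau$ and adding the gradient estimate gives the lemma with numerical constants.

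The genuinely delicate point is the admissibility of $\varphi$: since $u$ is only assumed positive and locally bounded, neither $u^{-1}$ nor $\log u$ need be bounded, so the computation must be performed for the time-mollification of $u$ and with $u^{-1}$ replaced by $(u+\delta)^{-1}$; one then lets $\delta\to0$ and removes the mollification, the gradient term passing to the limit by monotone convergence and the other terms by dominated convergence (the inequality being trivial when its right-hand side is infinite). This dampening and smoothing is exactly the procedure behind Lemma~\ref{caccioppoli}, and since the logarithmic case $\eps=1$ introduces no new idea I would refer to \cite[Lemma~2.4]{lehtela16} for these technicalities rather than reproduce them.
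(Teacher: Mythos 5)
Your proposal is correct and follows essentially the same route as the paper: test (formally) with $u^{-1}\zeta^{2}$, recognize $u^{-1}u_{t}=\partial_{t}\log u$, integrate by parts in time, absorb the cross term by Young's inequality, and recover both signs of the slice integral $\int_{\Omega}\zeta^{2}\log u\,dx$ before taking the essential supremum, with the rigorous justification delegated to the smoothing/dampening argument of \cite[Lemma~2.4]{lehtela16}. The only difference is cosmetic: you extract the time-slice term via piecewise-linear ramps $\eta$ and $h\to0$, whereas the paper works on $(t_{1},t_{2})$ and takes suprema over the endpoints.
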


\begin{proof}
Let $0\le t_1 < t_2\le T$. Formally, we apply  $\vp=u^{-1}\zeta^2$ as a test function in the inequality
\[
\int_{t_1}^{t_2} \int_\Omega \left ( \nabla (u^m) \cdot \nabla \vp + \vp \frac {\partial u}{\partial t} \right) \, dx \, dt \ge 0.
\]
We observe $u^{-1} \frac {\partial u}{\partial t} = \frac {\partial \log(u)}{\partial t}$ and integrate by parts to get
\begin{equation}\label{log estimate1}
\begin{split}
&\int_{t_1}^{t_2} \int_\Omega \Big(- m u^{m-3} | \nabla u|^2 \zeta^2 + 2mu^{m-2} \zeta \nabla u \cdot \nabla \zeta - 2 \zeta \zeta_t \log u \Big)  \, dx \, dt \\
&+ \int_\Omega \zeta(x,t_2)^2 \log u(x,t_2) \, dx -  \int_\Omega \zeta(x,t_1)^2 \log u(x,t_1) \, dx \ge 0.
\end{split}
\end{equation}
Young's inequality gives
\begin{equation}\label{young}
\begin{split}
|2mu^{m-2} \zeta \nabla u \cdot \nabla \zeta| 
&\le   2mu^{m-2} \zeta |\nabla u||\nabla \zeta| \\
&\le \frac m 2 ( \zeta^2 u^{m-3}|\nabla u |^2 + 4  u^{m-1}|\nabla \zeta |^2). 
\end{split}
\end{equation}
By combining  \eqref{log estimate1} and \eqref{young} we arrive at
\begin{align*}
  &\int_{t_1}^{t_2} \int_\Omega \frac m2 u^{m-3} |\nabla u|^2 \zeta^2 \, dx \,dt + \int_\Omega \zeta(x,t_1)^2 \log u(x,t_1) \, dx \\
&\qquad- \int_\Omega \zeta(x,t_2)^2 \log u(x,t_2) \, dx \\
&\le 2 \int_{t_1}^{t_2} \int_\Omega u^{m-1} |\nabla \zeta|^2 \, dx \, dt +2\int_{t_1}^{t_2} \int_\Omega \zeta |\zeta_t| |\log u| \, dx \, dt.
\end{align*}
In the previous estimate we may first take supremum over $t_1$ and then let $t_2\rightarrow T$ or first take supremum over $t_2$ and then let $t_1\rightarrow
0$. It follows that
 \begin{align*}
  &\int_{0}^{T} \int_\Omega m u^{m-3} |\nabla u|^2 \zeta^2 \, dx \,dt + \esssup_{t\in (0,T)}\left|\int_\Omega \zeta^2 \log u \, dx \right|\\
&\le 4 \int_{0}^{T} \int_\Omega u^{m-1} |\nabla \zeta|^2 \, dx \, dt +4\int_{0}^{T} \int_\Omega \zeta |\zeta_t| |\log u| \, dx \, dt. 
\end{align*}
\end{proof} 

Finally, we recall a parabolic Sobolev's inequality, which is a tool to conclude local integrability estimates for a function in terms
of its gradient. 

\begin{lemma}[Sobolev]\label{parabolic sobolev}
Assume that $u \in L^p(0,T;W^{1,p}(\Omega))$ and $\zeta\in C_0^\infty(\Omega_T)$. 
Then there exists constant $C$, depending only on $n$, such that
\[
\iint_{\Omega_T}  |\zeta u|^q \, dx \, dt 
\le C^q \iint_{\Omega_T}  |\nabla(\zeta u)|^p \, dx \, dt \left ( \esssup_{t\in(0,T)} \int_\Omega |\zeta u|^r \, dx \right)^{\frac pn},
\]
where $r>0$ can be chosen as we please and $q=p+\frac{pr}n$.
\end{lemma}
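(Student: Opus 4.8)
The plan is to reduce the estimate to a purely spatial inequality at a fixed time and then integrate in time. Set $v=\zeta u$. Since $\zeta\in C_0^\infty(\Omega_T)$, for a.e.\ $t\in(0,T)$ the slice $x\mapsto v(x,t)$ lies in $W^{1,p}(\Omega)$ and has compact support, hence, extended by zero, it belongs to $W^{1,p}_0(B)$ for any ball $B$ containing its support. The heart of the matter is the pointwise-in-time inequality
\[
\int_{\R^n}|v(x,t)|^q\,dx\le C^q\Big(\int_{\R^n}|\nabla v(x,t)|^p\,dx\Big)\Big(\int_{\R^n}|v(x,t)|^r\,dx\Big)^{p/n},
\]
where $q=p+\tfrac{pr}{n}$ and $C=C(n)$. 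Granting this, one integrates over $t\in(0,T)$; since the exponent $p/n$ is positive one may bound the last factor by $\big(\esssup_{t\in(0,T)}\int_\Omega|v|^r\,dx\big)^{p/n}$ and pull it out of the $t$-integral, which is exactly the assertion of the lemma.

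To prove the fixed-time inequality, I would use the identity $q=p+\tfrac{pr}{n}$ to split $|v|^q=|v|^p\cdot|v|^{pr/n}$ and apply Hölder's inequality with the conjugate pair $\tfrac{p^*}{p}$, $\tfrac{n}{p}$, where $p^*=\tfrac{np}{n-p}$ and $1\le p<n$ (the range we use). Because $\tfrac{pr}{n}\cdot\tfrac{n}{p}=r$, this yields
\[
\int_{\R^n}|v|^q\,dx\le\Big(\int_{\R^n}|v|^{p^*}\,dx\Big)^{(n-p)/n}\Big(\int_{\R^n}|v|^r\,dx\Big)^{p/n},
\]
and the first factor equals $\|v\|_{L^{p^*}(\R^n)}^{p}$. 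It then remains to invoke the Sobolev inequality $\|v\|_{L^{p^*}(\R^n)}\le C\|\nabla v\|_{L^p(\R^n)}$; to keep the constant dependent on $n$ alone (with $p$ in a fixed compact subrange, as in all our applications), one derives it from the elementary estimate $\|w\|_{L^{n/(n-1)}(\R^n)}\le c_n\|\nabla w\|_{L^1(\R^n)}$ applied to $w=|v|^{s}$ with $s=\tfrac{p(n-1)}{n-p}$, together with Hölder with exponents $p$ and $p'$: a short computation gives $(s-1)p'=p^*$ and $s-\tfrac{p^*}{p'}=1$, so after dividing through one obtains $\|v\|_{L^{p^*}}\le c_n\tfrac{p(n-1)}{n-p}\|\nabla v\|_{L^p}$. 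Substituting back gives the pointwise inequality.

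The only thing to watch is the exponent bookkeeping: the relation $q=p+\tfrac{pr}{n}$ is precisely what makes the Hölder splitting close (and it is forced by scaling, so no other choice could work), and the two auxiliary identities $(s-1)p'=p^*$, $s-\tfrac{p^*}{p'}=1$ must be checked. I do not expect a genuine obstacle; the remaining points are routine measure-theoretic ones — that $v(\cdot,t)\in W^{1,p}_0$ and $\int|v(\cdot,t)|^r\,dx<\infty$ for a.e.\ $t$ (otherwise the right-hand side is infinite and there is nothing to prove), and that the slicewise inequality may be integrated — and, if one wanted the statement for $p\ge n$ as well, the replacement of the embedding $W^{1,p}_0\hookrightarrow L^{p^*}$ by its standard substitute; none of this affects the structure of the argument.
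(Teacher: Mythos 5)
Your slicewise strategy is exactly the standard one: the paper gives no proof of its own but cites DiBenedetto (Degenerate Parabolic Equations, pp.~7--8), where the argument is precisely a fixed-time Gagliardo--Nirenberg/Sobolev estimate integrated in time after pulling out the essential supremum. Your exponent bookkeeping for $1\le p<n$ is correct: with $q=p+\frac{pr}{n}$ the splitting $|v|^q=|v|^p|v|^{pr/n}$ and H\"older with the conjugate pair $\frac{p^*}{p}$, $\frac{n}{p}$ close up, $(\int|v|^{p^*})^{(n-p)/n}=\|v\|_{p^*}^p$, and your derivation of $\|v\|_{p^*}\le c_n\frac{p(n-1)}{n-p}\|\nabla v\|_p$ from the $L^1$-Sobolev inequality checks out (the identities $(s-1)p'=p^*$ and $s-\frac{p^*}{p'}=1$ are correct, with the usual truncation/density remark needed to justify dividing by $\|v\|_{p^*}^{p^*/p'}$). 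Nothing goes wrong for $r<1$, which is the case in the paper's applications, and the time integration is harmless.

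The one genuine weakness is the restriction $p<n$. The lemma carries no such hypothesis, and the paper applies it with $p=2$ in arbitrary dimension, so the cases $n=1,2$ (i.e.\ $p\ge n$) are actually needed; there your H\"older-with-$p^*$ step is meaningless, and ``replace the embedding by its standard substitute'' is too quick, because the substitute embeddings ($L^s$ for all finite $s$, or $L^\infty$) do not by themselves produce the exact product form $\int|v|^q\,dx\le C\int|\nabla v|^p\,dx\,(\int|v|^r\,dx)^{p/n}$, which is forced by scaling. What is really needed at a fixed time is the Gagliardo--Nirenberg interpolation inequality $\|v\|_q\le C\|\nabla v\|_p^{p/q}\|v\|_r^{1-p/q}$, whose exponent relation $\frac1q=\frac pq\bigl(\frac1p-\frac1n\bigr)+\bigl(1-\frac pq\bigr)\frac1r$ you can verify directly; it holds also for $p\ge n$ since $p/q<1$ and $q<\infty$, and its proof (via the $L^1$-Sobolev inequality applied to a power of $|v|$, as in your computation, plus an interpolation) is what the cited pages of DiBenedetto carry out. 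Finally, note that your constant depends on $p$ (and degenerates as $p\to n$); this is fine here because $p=2$ is fixed, but it shows the ``depending only on $n$'' in the statement should really be read as depending on $n$, $p$ and $r$, as in DiBenedetto.
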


\begin{proof}
See \cite[p. 7--8]{dibenedetto93}.
\end{proof}
\section{Characterizations for class $\mathfrak{B}$}\label{sec:B}

In this section we consider $m$-supercaloric functions that have a similar behaviour as the Barenblatt solution. 
We say that a positive $m$-supercaloric function $u$ belongs to class $\mathfrak{B}$, if $u\in L_{\loc}^{m-1}(\Omega_T)$.  
In other words, the pressure $\frac{u^{m-1}}{m-1}$ is locally integrable.
The following theorem gives several characterizations for these functions.

\begin{theorem}\label{class B}
Assume that $u$ is a positive $m$-supercaloric function in $\Omega_T$.
Then the following properties are equivalent:
  \begin{itemize}
  \item[(i)] $u\in L_{\loc}^q(\Omega_T)$ for some $q>m-1$,
  \item[(ii)] $u\in L_{\loc}^{m-1}(\Omega_T)$,
\item[(iii)] $\nabla (u^m)$ exists and $\nabla (u^m) \in L_{\loc}^q (\Omega_T)$  whenever $q< 1+ \frac 1 {1+mn}$,
\item[(iv)] 
$\displaystyle\esssup_{t\in(\delta,T-\delta)} \int_D u(x,t)\, dx < \infty$
whenever $D\Subset\Omega$ and $\delta\in(0,\frac T2)$. 
 \end{itemize}
\end{theorem}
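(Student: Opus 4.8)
The plan is to prove the cycle of implications $\mathrm{(i)}\Rightarrow\mathrm{(ii)}\Rightarrow\mathrm{(iv)}\Rightarrow\mathrm{(iii)}\Rightarrow\mathrm{(i)}$. Two of the arrows are immediate: for $\mathrm{(i)}\Rightarrow\mathrm{(ii)}$ use H\"older's inequality on relatively compact subcylinders, where the measure is finite and $q>m-1$; for $\mathrm{(iii)}\Rightarrow\mathrm{(i)}$ note that the existence of the Sobolev gradient of $u^m$ already forces $u^m\in L^1_{\loc}(\Om_T)$, that is, $u\in L^m_{\loc}(\Om_T)$, and $m>m-1$. For the two substantial implications I would reduce to bounded functions by the truncations $u_j=\min\{u,j\}$: each $u_j$ is a bounded $m$-supercaloric function, hence a weak supersolution by \cite{kinnunenl08}, so Lemmas~\ref{Harnack}, \ref{caccioppoli}, \ref{caccioppoli2} and \ref{parabolic sobolev} apply to it, and every integrability assertion for $u$ is recovered from uniform-in-$j$ bounds for $u_j$ by monotone convergence. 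I would also record at the outset that, since $u$ is lower semicontinuous and $u>0$, it attains a positive minimum on every compact subset of $\Om_T$, so on a fixed cutoff support $u$ (and $u_j$ for large $j$) is bounded below by a positive constant; this makes the negative powers and logarithms that appear in the Caccioppoli estimates harmless.

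For $\mathrm{(ii)}\Rightarrow\mathrm{(iv)}$ I would use the weak Harnack estimate, Lemma~\ref{Harnack}. Fix $B(x_0,8r)$ with $B(x_0,8r)\times(0,T)\subset\Om_T$ and, for a.e.\ $t_0\in(\delta,T-\delta)$, abbreviate $A_j=\vint_{B(x_0,r)}u_j(x,t_0)\,dx$. Lemma~\ref{Harnack} bounds $A_j$ by $(C_1r^2/(T-t_0))^{1/(m-1)}+C_2\essinf_Q u_j$, with $Q\subset B(x_0,4r)\times(t_0,t_0+\theta)$ and $\theta=\min\{T-t_0,\,C_1r^2A_j^{-(m-1)}\}$. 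If the first term of the minimum is attained, then $A_j\le(C_1r^2/(T-t_0))^{1/(m-1)}$ directly. Otherwise $|Q|\ge c\,r^{n+2}A_j^{-(m-1)}$, so, estimating $\essinf_Q u_j\le(\vint_Q u_j^{m-1})^{1/(m-1)}$ and using $u_j\le u$,
\[
C_2\essinf_Q u_j\le C\,A_j\Bigl(r^{-(n+2)}\!\!\int_{B(x_0,4r)\times(t_0,t_0+\theta)}\!\!u^{m-1}\,dx\,dt\Bigr)^{\frac1{m-1}}.
\]
Here the hypothesis $u\in L^{m-1}_{\loc}$ is decisive: by a modulus of absolute continuity for $\int u^{m-1}$ that is uniform over the compact time interval $[\delta,T-\delta]$, once $A_j$ exceeds a threshold $M=M(r)$ independent of $t_0$ (so that $\theta$ is correspondingly small) the parenthesis is small enough that $C_2\essinf_Q u_j\le\frac12A_j$, which absorbs and yields $A_j\le2(C_1r^2/(T-t_0))^{1/(m-1)}$. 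In all cases $A_j\le\max\{M,\,2(C_1r^2/\delta)^{1/(m-1)}\}$ uniformly in $j$; covering $D\Subset\Om$ by finitely many such balls and letting $j\to\infty$ gives $\esssup_{t\in(\delta,T-\delta)}\int_D u(x,t)\,dx<\infty$, i.e.\ $\mathrm{(iv)}$.

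For $\mathrm{(iv)}\Rightarrow\mathrm{(iii)}$ I would run an iteration on the integrability exponent. Suppose $u\in L^{p}_{\loc}(\Om_T)$ with $1\le p<m$ (the case $p=1$ being $\mathrm{(iv)}$); apply Lemma~\ref{caccioppoli} to $u_j$ with $\eps=m-p>0$. Its right-hand side is finite: the term $\iint u^{m-\eps}|\nabla\zeta|^2=\iint u^{p}|\nabla\zeta|^2$ is controlled since $u\in L^p_{\loc}$, and the term involving $u^{1-\eps}=u^{1-m+p}$ is controlled either by $u\in L^p_{\loc}$ (if $1-m+p\ge0$) or by the local lower bound on $u$ (if $1-m+p<0$); when $p=m-1$, so that $\eps=1$ is excluded --- in particular at the first step when $m=2$ --- the logarithmic estimate Lemma~\ref{caccioppoli2} is used instead. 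The estimate then gives $\nabla(u^{p/2})\in L^2_{\loc}(\Om_T)$, with a quantitative bound. Feeding this together with the bound $\esssup_t\int_D u\,dx<\infty$ from $\mathrm{(iv)}$ into Lemma~\ref{parabolic sobolev} applied to $v=u^{p/2}$ with $r=2/p$ yields $u\in L^{p+2/n}_{\loc}(\Om_T)$. Iterating (and, once the process has passed the level $m$, restarting from an exponent just below $m$ to fill in the remaining range) one reaches $u\in L^q_{\loc}(\Om_T)$ for every $q<m+\tfrac2n$. Finally, writing $\nabla(u^m)=\tfrac{2m}{p}\,u^{m-p/2}\nabla(u^{p/2})$ and applying H\"older's inequality to the $L^2$-bound on $\nabla(u^{p/2})$ (valid for $p<m$) and the integrability $u^{m-p/2}\in L^{q_1}_{\loc}$ for $q_1(m-p/2)<m+\tfrac2n$, and letting $p\uparrow m$, one obtains $\nabla(u^m)\in L^q_{\loc}(\Om_T)$ for every $q<1+\tfrac1{1+mn}$, which is $\mathrm{(iii)}$.

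I expect the genuine work to lie in two places. In $\mathrm{(ii)}\Rightarrow\mathrm{(iv)}$ the threshold $M$ must be chosen uniformly in $t_0$, which requires a uniform modulus of absolute continuity for $\int u^{m-1}$ on a compact time interval (the ``bad'' interval $(t_0,t_0+\theta)$ drifts with $t_0$). In $\mathrm{(iv)}\Rightarrow\mathrm{(iii)}$ the iteration must be organized so that the exponents actually climb to the sharp value $m+\tfrac2n$ without stalling and so that the $\eps$-dependent constants stay finite at every stage, and the interplay with the constraint $\eps>0$ (that is, $p<m$) is exactly what produces the sharp gradient exponent $1+\tfrac1{1+mn}$ rather than something larger.
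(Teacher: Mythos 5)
Your plan is essentially correct, but it is organized quite differently from the paper. The paper does not prove a direct cycle: it first replaces (iv) by a weaker condition (iv') with a fractional power $u^\alpha$, proves (i)$\Leftrightarrow$(iii) and (i)$\Leftrightarrow$(iv'), and only recovers (ii) and the full (iv) afterwards (Remark \ref{class B concluded}) by the dichotomy with class $\mathfrak M$: if (ii) or (iv) held but (i) failed, Lemma \ref{blow-up} and Theorem \ref{class M} force the friendly-giant lower bound $u\ge C(t-t_0)^{-1/(m-1)}$, contradicting $u\in L^{m-1}_{\loc}$. Moreover, for (i)$\Rightarrow$(iii) the paper does not redo the gradient estimate; it invokes \cite[Theorem 1.4]{kinnunenl08} and spends its effort on repairing that proof, namely showing (via the intrinsic Harnack inequality, Lemma \ref{harnack sol}, and comparison with friendly-giant barriers) that the Poisson modification exists under $u\in L^q_{\loc}$, $q>m-1$. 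Your route instead stays inside the toolbox of Section 2: (ii)$\Rightarrow$(iv) by an absorption argument in the weak Harnack inequality, using $\essinf_Q u_j\le(\vint_Q u^{m-1})^{1/(m-1)}$ and the uniform absolute continuity of $\iint u^{m-1}$ over the compact time window (this is sound: when $A_j$ exceeds a threshold depending only on $r,\delta$ and that modulus, $\theta$ is forced into the second branch of the minimum and the term absorbs); and (iv)$\Rightarrow$(iii) by running the Caccioppoli--Sobolev iteration all the way to $u\in L^s_{\loc}$ for every $s<m+\frac2n$ and then splitting $\nabla(u^m)=\frac{2m}{p}u^{m-p/2}\nabla(u^{p/2})$ by H\"older, whose exponent arithmetic in the limit $p\uparrow m$ does give exactly $q<1+\frac1{1+mn}$. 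What your approach buys is a proof of Theorem \ref{class B} that is self-contained modulo the fact that truncations of $m$-supercaloric functions are weak supersolutions (\cite[Theorem 3.2]{kinnunenl08}, which the paper also uses freely), avoiding both the Poisson modification and the class-$\mathfrak M$ machinery; what the paper's organization buys is economy, since the blow-up alternative and Lemma \ref{blow-up} are needed anyway for Theorem \ref{class M}, and the hard gradient estimate is inherited from \cite{kinnunenl08} rather than reproved. Two small points to tighten in your write-up: the existence of the weak gradient $\nabla(u^m)$ is not obtained by monotone convergence but by weak compactness (uniform $L^q_{\loc}$ bounds, $q>1$, for $\nabla(u_j^m)$ together with $u_j^m\to u^m$ in $L^1_{\loc}$); and in the Harnack step the null sets of admissible times $t_0$ depend on $j$, so you should take their union before passing to the limit.
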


\begin{proof}
First we prove the theorem in the case, when (iv) is replaced with the following slightly weaker condition:
\begin{itemize}
\item[(iv')] there exists $\alpha\in(0,1)$ such that
\begin{equation}\label{eq:ualpha}
\esssup_{t\in(\delta,T-\delta)} \int_D u(x,t)^\alpha \, dx < \infty
\end{equation}
whenever $D\Subset\Omega$ and $\delta\in(0,\frac T2)$. 
 \end{itemize}
We show that (i) $\Longleftrightarrow$ (iii)  and (i) $\Longleftrightarrow$ (iv').
The remaining equivalences are treated in Remark \ref{class B concluded}.

First we show that (i) implies (iii). This follows from \cite[Theorem 1.4]{kinnunenl08}. 
However, there is a missing assumption in   \cite[Theorem 1.4]{kinnunenl08} and the existence of the Poisson modification was taken for granted in \cite[Section 5]{kinnunenl08}. 
In order to complete the proof, we show that the Poisson modification exists under the assumption $u\in L_{\loc}^q(\Omega_T)$ with $q>m-1$. 

Let $Q\Subset\Omega$ be a cube and let $Q'\Subset Q$ be a subcube of $Q$. 
Fix $t_1\in(0,T)$.
We redefine $u$ by setting $u(x,t)=0$ when $(x,t)\in Q\times(0,t_1)$.
Observe that the redefined function is $m$-supercaloric in $Q_T$.
By lower semicontinuity, there exists an increasing sequence of non-negative smooth functions $\psi_k$, 
such that $\psi_k\rightarrow u$ pointwise in $Q_T$ as $k\to\infty$. 
Note that $\psi_k=0$ in $Q\times(0,t_1)$ for every $k=1,2,\dots$. 

Let $h_k$ be the unique weak solution to the porous medium equation in $(Q\setminus \overline{Q'}) \times (0,T)$ with boundary values
\[
h_k=
\begin{cases}
  \psi_k \quad \text{on}\quad \bd Q' \times [0,T],\\
0\quad  \text{on}\quad \bd Q \times [0,T],\\
0 \quad \text{in}\quad (Q\setminus Q')\times \{0\}.
\end{cases}
\]
Such a solution $h_k$ is continuous up to the boundary so that $h_k\in C((\overline Q\setminus Q')\times [0,T])$. 
By the comparison principle  
\[
h_1\le h_2\le\dots
\quad\text{and}\quad h_k\le u
\quad\text{in}\quad  (Q\setminus \overline{Q'}) \times (0,T)
\] 
for every $k=1,2,\dots$. 
Let $h=\lim_{k\to\infty}h_k$ and
\[
v=
\begin{cases}
  u \quad \text{in}\quad \overline{Q'}\times(0,T),\\
h \quad \text{in}\quad (Q\setminus\overline{Q'})\times(0,T).
\end{cases}
\]
We claim that $v$ is $m$-supercaloric in $Q_T$ and call it the Poisson modification of $u$ in $Q_T$. 
The crucial step in the proof is to show that $v<\infty$ on a dense subset.
To this end, we show that, for any $(x_0,t_0)\in(Q\setminus \overline{Q'})\times(0,T)$, there does not exist a sequence $(x_k,t_k)\rightarrow (x_0,t_0)$, such that 
\[
\lim_{k\rightarrow \infty} h_k(x_k,t_k) = \infty.
\]
Suppose that such a sequence exists for some $(x_0,t_0)$. 
Choose $r>0$ so small that $B(x_0,2r)\subset Q\setminus \overline{Q'}$ and denote 
\[
\theta_k= \frac {C_2 r^2}{h_k(x_k,t_k)^{m-1}},
\quad k=1,2,\dots,
\] 
where $C_2$ is the constant in Lemma \ref{harnack sol}. 
Since $h_k(x_k,t_k) \to \infty$ we have $\theta_k\rightarrow 0$ as $k\to\infty$.
Thus, for $k$ large enough, we have $(t_k-\theta_k, t_k+\theta_k)\subset(0,T)$. 
Let $U$ be the unique positive weak solution to \eqref{eq:elliptic-eq}
with zero boundary values in $B(x_0,r)$. Such a solution is continuous up to the boundary so that $U\in C(\overline B(x_0,r))$.
Then
\[
V_k(x,t)=\frac{U(x)}{(t-t_k+(\lambda-1)\theta_k)^{\frac 1 {m-1}}},
\]
with $\lambda>1$, is a weak solution to the porous medium equation in $B(x_0,r)\times(t_k+\theta_k, T)$. 
We choose $\lambda>1$ so large that
\[
\frac{\|U\|_{L^\infty(B(x_0,r))}}{(\lambda C_2 r^2)^{\frac{1}{m-1}}}\le \frac{1}{C_1},
\]
where $C_1$ and $C_2$ are the constants in Lemma \ref{harnack sol}. 
Then
\begin{align*}
V_k(x,t_k+\theta_k)
&=\frac{U(x)}{(\lambda C_2 r^2)^{\frac 1 {m-1}}}h_k(x_k,t_k) \\
&\le \frac{U(x)}{(\lambda C_2 r^2)^{\frac 1 {m-1}}}C_1 h_k(x,t_k+\theta_k) \\
&\le h_k(x,t_k+\theta_k) 
\quad \text{for every}\quad x\in B(x_k,r),
\end{align*}
where we used Lemma \ref{harnack sol} for the first inequality. 
The comparison principle implies $V_k\le h_k$ in $B(x_k,r)\times(t_k+\theta_k,T)$. 
By letting $k\rightarrow \infty$ we obtain
\[
\frac {U(x)}{(t-t_0)^{\frac{1}{m-1}}} \le h(x,t) 
\quad\text{for every}\quad (x,t)\in B(x_0,r)\times(t_0,T)).
\]
Since $h=\lim_{k\to\infty}h_k\le u$ in $(Q\setminus \overline{Q'}) \times (0,T)$, we have
\[
u(x,t)\ge h(x,t)\ge \frac{U(x)}{(t-t_0)^{\frac{1}{m-1}}}
\quad\text{for every}\quad (x,t)\in B(x_0,r)\times(t_0,T)
\]
and thus $u\notin L^q_{\loc}(\Omega_T)$ whenever $q>m-1$.

Thus we may apply the Harnack type convergence theorem for weak solutions \cite[Lemma 3.4]{kinnunenl08} to conclude that $v$ is  $m$-supercaloric in $Q_T$ as in  \cite[Section 5]{kinnunenl08}.This shows that (i) is valid.

Next we show that (iii) implies (i).
Assume that $\nabla u^m$ exists and 
\[
\nabla u^m \in L_{\loc}^q (\Omega_T)
\quad\text{whenever}\quad 1\le q< 1+ \frac 1 {1+mn}.
\] 
In particular, $\nabla u^m \in L^1_{\loc} (\Omega_T)$. By the definition of a weak derivative this includes $u^m \in L^1_{\loc} (\Omega_T)$.  

Then we show that (i) implies (iv'). 
Let $D\Subset \Omega$ and $\delta\in(0,\frac T2)$. 
Again, we consider the truncations $u_j=\min\{u,j\}$, $j=1,2,\dots$. 
 Since $u_j$ is a weak supersolution, it satisfies the Caccioppoli estimate, see Lemma \ref{caccioppoli}. By assumption, $u\in L_{\loc}^{m-\eps}(\Omega_T)$ for some $\eps\in (0,1)$. 
 We choose a cut-off function $\zeta\in C_0^\infty(\Omega_T)$ such that  $0\le\zeta\le 1$ and $\zeta = 1$ in $D\times(\delta,T-\delta)$.
Lemma \ref{caccioppoli} implies
\begin{align*}
 & \esssup_{t\in(\delta,T-\delta)} \int_D u_j^{1-\eps} \, dx\\
&\le C \left(\iint_{\Omega_T} u_j^{m-\eps} |\nabla \zeta|^2 \, dx \, dt + \iint_{\Omega_T} u_j^{1-\eps} \zeta |\zeta_t| \, dx \, dt\right)\\
&\le C \left(\iint_{\Omega_T} u^{m-\eps} |\nabla \zeta|^2 \, dx \, dt +\iint_{\Omega_T} u^{1-\eps} \zeta |\zeta_t| \, dx \, dt\right)  <\infty.
\end{align*}
Observe that the integrals above are finite, since $u\in L_{\loc}^{m-\eps}(\Omega_T)$ and the support of $\zeta$ is a compact subset of $\Omega_T$.
Since the constant $C$ is independent of $j$, the claim follows by letting $j\rightarrow \infty$. 

Finally we show that (iv') implies (i). 
As above we consider the truncations of $u$, but this time we leave it out in the notation.
Observe that all constants below are independent on the level of truncation.
Let $\zeta$ be a cut-off function as above.
We will show by an iteration argument that $u\in L^q_{\loc}(\Omega_T)$ for some $q>m-1$. 
The idea of the proof is the following. We will show that by (iv') we have $u\in L^{s_0}_{\loc}(\Omega_T)$ for $s_0=\alpha$, and $u\in L^{s_j}_{\loc}(\Omega_T)$ implies $u\in L^{s_{j+1}}_{\loc}(\Omega_T)$ for an increasing sequence of exponents $s_j$. We may iterate this until either $s_j>m-1$ or $s_j=m-1$. In the former case we are done and the latter case is treated separately.

For $\alpha\in(0,1)$ we define
\begin{align*}
s_j=\alpha\left(1+\frac{2j}{n}\right), 
\quad 
r_j=\frac 2 {1+\frac {2j}n}
\quad\text{and}\quad
q_j = 2 \left(1+\frac 2 {n\left(1+\frac{2j}n\right)}\right)
\end{align*}
for $j=0,1,2,\dots$.
We observe that $\frac {s_j}2 q_j = s_{j+1}$ and apply Sobolev's inequality to $w=u^{\frac{s_j}2}$, see Lemma \ref{parabolic sobolev}. This gives
\begin{equation}\label{sobolev iteration1}
\begin{split}
&\iint_{\Omega_T} u^{s_{j+1}} \zeta^{q_j} \, dx \, dt\\
&\le C\iint_{\Omega_T}\left( u^{s_j} |\nabla \zeta|^2 +\zeta^2 u^{s_j-2} |\nabla u|^2\right) \, dx \, dt 
\left( \esssup_{t\in(0,T)} \int_\Omega \zeta^{r_j} u^\alpha \, dx \right)^{\frac2n}.
\end{split}
\end{equation}
By (iv') we have
\[
\esssup_{t\in(0,T)} \int_\Omega \zeta^{r_j} u^\alpha \, dx<\infty.
\]
Lemma \ref{caccioppoli} with $\eps=m-s_j$ implies
\begin{equation}\label{sobolev iteration2}
\begin{split}
 & \iint_{\Omega_T} \zeta^2 u^{s_j-2} |\nabla u|^2 \, dx \, dt\\
&\le C \left( \iint_{\Omega_T} u^{s_j} |\nabla \zeta|^2 \, dx \, dt + \iint_{\Omega_T} u^{s_j-(m-1)} \zeta |\zeta_t| \, dx \, dt \right)< \infty.
\end{split}
\end{equation}
Observe that $u>0$ is a lower semicontinous function and thus it attains its strictly positive minimum $\delta$ on every compact subset of $\Omega_T$.
The same $\delta$ will do for the original $u$ and all truncations.
Thus 
\[u^{s_j-(m-1)} \le \delta^{-(m-1)}u^{s_j}
\] 
for some $\delta>0$ in the support of $\zeta$ and
the second integral on the right-hand side of \eqref{sobolev iteration2} is finite.
Then we consider the first integral on the right-hand side of \eqref{sobolev iteration2}.
We note that in the first step of iteration $s_0=\alpha$ and by (iv') we have
\[
\iint_{\Omega_T} u^{s_0} |\nabla \zeta|^2 \, dx \, dt
\le \esssup_{t\in(0,T)} \int_\Omega \zeta u^\alpha \, dx<\infty.
\]
which implies that $u\in L^{s_0}_{\loc}(\Omega_T)$.
In general, from \eqref{sobolev iteration1}  and \eqref{sobolev iteration2} we may conclude that
if $u\in L^{s_j}_{\loc}(\Omega_T)$ for some $j$, then $u\in L^{s_{j+1}}_{\loc}(\Omega_T)$. 
By iterating this argument, we may step by step increase the local integrability exponent of $u$.
It is essential that we shall use only a finite number of iterations.

This iteration can be done as long as $\eps=m-s_j>0$ and $\eps=m-s_j\ne1$. 
We may assume $\eps>0$ since (i) holds if $s_j>m$. The case $\eps=1$ will be treated separately. Since $s_j$ is an increasing sequence, we can find an index $k$ such that $s_{k-1} <m-1 \le s_k$.
If $s_k>m-1$, then $u\in L^{s_k}_{\loc}(\Omega_T)$ and we are done. 
It remains to consider the case $s_k=m-1$. 
Denote 
\[
r=\frac{2\alpha}{m-1}
\quad\text{and}\quad q= 2\left(1+ \frac {2\alpha}{n(m-1)}\right).
\] 
By applying Sobolev's inequality, see Lemma \ref{parabolic sobolev}, to $w=u^{\frac{m-1}2}$, we obtain
\begin{equation}\label{sobolev iteration3}
\begin{split}
&\iint_{\Omega_T} \zeta^q u^{m-1+\frac {2\alpha}n} \, dx \, dt \\
&\le C \iint_{\Omega_T} \Big( u^{m-1}|\nabla \zeta|^2 + \zeta^2 |\nabla (u^{\frac{m-1}2}) |^2 \Big) \, dx \, dt
 \left (\esssup_{t\in(0,T)} \int_\Omega \zeta^r u^\alpha \, dx \right)^{\frac2n}. 
\end{split}
\end{equation}
Lemma \ref{caccioppoli2} implies
\begin{align*}
&\iint_{\Omega_T}\zeta^2 |\nabla (u^{\frac{m-1}2})|^2 \, dx \, dt\\
&\le C \left(\iint_{\Omega_T} u^{m-1} |\nabla \zeta|^2 \, dx \, dt + \iint_{\Omega_T} |\log(u)| \zeta|\zeta_t| \, dx \, dt \right) <\infty.
\end{align*}
Thus the right-hand side of \eqref{sobolev iteration3} is finite and $u\in L^{m-1+\frac{2\alpha}n}_{\loc}(\Omega_T)$.
\end{proof}

We point out some further implications related to class $\mathfrak{B}$.

\begin{remark}\label{B properties}
A function $u\in\mathfrak{B}$ has the following properties:
\begin{enumerate}
\item  $u\in L_{\loc}^q(\Omega_T)$ for every $q<m+\frac 2n$. 
This is a consequence of a reverse H\"older inequality for supersolutions to the porous medium equation, see \cite{kinnunenl08} and \cite{lehtela16}.
In particular, this implies that  $u\in L_{\loc}^1(\Omega_T)$.
\item There exists a Radon measure $\mu$ on $\R^{n+1}$, such that $u$ is a weak solution to the measure data problem 
\[
u_t-\Delta(u^m) = \mu.
\]
To see this, by the discussion above $u\in L_{\loc}^1(\Omega_T)$ and $\nabla (u^m)\in L_{\loc}^1(\Omega_T)$. 
Thus we may apply the Riesz representation theorem to the non-negative linear operator 
\[
L_u(\varphi)=\iint_{\Omega_T} \left( -u \vp_t + \nabla (u^m) \cdot \nabla \vp\right) \, dx \, dt,
\] 
where $\vp\in C^\infty_0(\Omega_T)$.
\end{enumerate}
\end{remark}

\section{Characterizations for class $\mathfrak{M}$}\label{sec:M}
We say that a positive $m$-supercaloric function $u$ belongs to class $\mathfrak{M}$, if  $u\not \in L_{\loc}^{m-1}(\Omega_T)$.
The friendly giant is a function in class $\mathfrak{M}$.  
The following theorem gives several characterizations for class $\mathfrak{M}$.

\begin{theorem} \label{class M}
Assume that $u$ is a positive $m$-supercaloric function in $\Omega_T$. 
Then the following properties are equivalent:
\begin{itemize}
\item[(i)] $u\not \in L_{\loc}^{q}(\Omega_T)$ for every $ q> m-1$,
\item[(ii)] $u\not \in L_{\loc}^{m-1}(\Omega_T)$,
\item[(iii)] there exists $\delta\in(0,\frac T2)$ such that 
\[
\esssup_{t\in(\delta,T-\delta)} \int_D u(x,t) \, dx = \infty,
\] 
whenever $D\Subset \Omega$ and $|D|>0$. 
\item[(iv)]
there exists $(x_0,t_0)\in \Omega_T$ such that 
\begin{equation}\label{eq:liminf bound}
\liminf_{{\substack{(x,t) \to (x_0,t_0)\\t>t_0}}} u(x,t)(t-t_0)^{\frac 1{m-1}}>0,
\end{equation}
\item[(v)]
there exists $t_0\in (0,T)$ such that 
\[
\lim_{{\substack{(x,t) \to (x_0,t_0)\\t>t_0}}} u(x,t)=\infty 
\quad\text{for every}\quad x_0\in \Omega.
\]
\end{itemize}
\end{theorem}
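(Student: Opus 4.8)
The plan is to use Theorem~\ref{class B} for the cheap implications and then close a short cycle through (iv) and (v). Indeed (i) $\Leftrightarrow$ (ii) is exactly the contrapositive of the equivalence (i) $\Leftrightarrow$ (ii) of Theorem~\ref{class B}, and (iii) $\Rightarrow$ (ii) is immediate because $u\in L^{m-1}_{\loc}(\Omega_T)$ would, by Theorem~\ref{class B}(iv), contradict (iii). So I would prove (ii) $\Rightarrow$ (iv) $\Rightarrow$ (v) $\Rightarrow$ (iii).

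For (ii) $\Rightarrow$ (iv): since $u\notin L^{m-1}_{\loc}(\Omega_T)$, property (iv) of Theorem~\ref{class B} fails, so there are $D\Subset\Omega$ and $\delta_0\in(0,T/2)$ with $\esssup_{t\in(\delta_0,T-\delta_0)}\int_D u(x,t)\,dx=\infty$; a finite covering of $D$ by small balls together with the pigeonhole principle lets me assume $D=B(x_0,\rho)$ with $B(x_0,8\rho)\Subset\Omega$. I pick times $t_k\in(\delta_0,T-\delta_0)$ in the full-measure set where Lemma~\ref{Harnack} applies, with $M_k:=\vint_{B(x_0,\rho)}u(x,t_k)\,dx\to\infty$, and pass to a subsequence $t_k\to\tau\in[\delta_0,T-\delta_0]\subset(0,T)$. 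Applying Lemma~\ref{Harnack} at $(x_0,t_k)$ with $r=\rho$: because $T-t_k\ge\delta_0$ the additive term stays bounded while $M_k\to\infty$, so $\essinf_{Q_k}u\ge\tfrac1{2C_2}M_k$ for large $k$, where $Q_k=B(x_0,4\rho)\times(t_k+\tfrac{\theta_k}2,t_k+\theta_k)$ and $\theta_k=C_1\rho^2M_k^{-(m-1)}\to0$; substituting $M_k=(C_1\rho^2/\theta_k)^{1/(m-1)}$ rewrites this as a friendly-giant--type bound $u(x,t)(t-t_k)^{1/(m-1)}\ge c_0$ a.e.\ on $Q_k$, with $c_0>0$ independent of $k$. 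Using lower semicontinuity of $u$ to read this off as a pointwise bound $u(\cdot,s_k)\ge c_0(\tfrac34\theta_k)^{-1/(m-1)}$ on $\overline{B(x_0,2\rho)}$ at the single time $s_k:=t_k+\tfrac34\theta_k$, I compare $u$ from below with the friendly giant $W_k(x,t)=U_*(x)(t-\sigma_k)^{-1/(m-1)}$ on $B(x_0,2\rho)\times(s_k,T)$, where $U_*>0$ solves \eqref{eq:elliptic-eq} on $B(x_0,2\rho)$ and $\sigma_k:=s_k-\kappa\theta_k$ with $\kappa:=\tfrac34(\|U_*\|_\infty/c_0)^{m-1}$ chosen so that $W_k\le u$ on the parabolic boundary. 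The comparison principle gives $u\ge W_k$ in the cylinder, and since $s_k\to\tau$, $\sigma_k\to\tau$, letting $k\to\infty$ yields $u(x,t)\ge U_*(x)(t-\tau)^{-1/(m-1)}$ on $B(x_0,2\rho)\times(\tau,T)$; then (iv) holds at $(x_0,\tau)$ since $U_*(x_0)>0$.

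For (iv) $\Rightarrow$ (v): by definition of $\liminf$, (iv) gives a genuinely pointwise bound $u(x,t)\ge c(t-t_0)^{-1/(m-1)}$ on $B(x_0,r)\times(t_0,t_0+r)$; comparing with a friendly giant on a small enough sub-ball (using the scaling of \eqref{eq:elliptic-eq} to make its profile have sup-norm $\le c$) upgrades this to $u(x,t)\ge U_0(x)(t-t_0)^{-1/(m-1)}$ on $B(x_0,r')\times(t_0,t_0+r')$, hence $u(\cdot,t_0+\sigma)\ge\mu\sigma^{-1/(m-1)}$ on $B(x_0,r'/2)$ for all $\sigma\in(0,r')$, with $\mu>0$. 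I then compare $u$ from below with time-shifted, rescaled Barenblatt solutions $\B_{\sigma}$ whose initial times lie in $(t_0,t_0+\sigma)$, whose profiles at time $t_0+\sigma$ are supported in $B(x_0,r'/2)$ with peak $\le\mu\sigma^{-1/(m-1)}$, and whose masses $\to\infty$ as $\sigma\to0$; since the support of a Barenblatt expands without bound, a finite chain of such comparisons over overlapping balls $\Subset\Omega$ --- each comparison cylinder truncated in time so the support never reaches its lateral boundary --- shows that for every fixed $z\in\Omega$ and every $M$ one has $u\ge M$ on a neighbourhood of $z$ times a right-neighbourhood of $t_0$, the point being that all the spreading times $\to0$ as $\sigma\to0$, so the blow-up is anchored at $t_0$ itself. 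This is (v). Finally (v) $\Rightarrow$ (iii): for $D\Subset\Omega$ with $|D|>0$ and $M>0$, the function $x\mapsto\sup\{\rho>0:u>M \text{ on } B(x,\rho)\times(t_0,t_0+\rho)\}$ is positive on $\overline D$ and $1$-Lipschitz from below, hence bounded below by some $\rho_M>0$ on the compact set $\overline D$, so $u>M$ on $D\times(t_0,t_0+\rho_M)$ and $\esssup_{t\in(\delta,T-\delta)}\int_D u(x,t)\,dx=\infty$ for the fixed $\delta:=\tfrac12\min\{t_0,T-t_0\}$; and (iii) $\Rightarrow$ (ii) is again the contrapositive of Theorem~\ref{class B}(iv), closing the cycle.

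The hard part will be the pair (ii) $\Rightarrow$ (iv) and (iv) $\Rightarrow$ (v). Producing a friendly-giant lower bound from a single large spatial average forces one to track the intrinsic scaling inside the weak Harnack inequality and to turn the almost-everywhere information it delivers into boundary data to which the comparison principle for $m$-supercaloric functions genuinely applies (this is where lower semicontinuity enters). Then spreading the resulting pointwise blow-up from a single ball to \emph{all} of $\Omega$ while keeping it pinned to the same time $t_0$ requires the quantitative spreading rates of the Barenblatt solution together with a careful chaining through finitely many space-time cylinders so that the lateral boundaries never interfere.
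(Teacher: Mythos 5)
Your overall architecture (produce a friendly-giant--type lower bound anchored at a blow-up time via the intrinsic weak Harnack inequality, then spread it over $\Omega$ by chaining, and close a cycle through (iii), (iv), (v)) is close in spirit to the paper's, which proves (i) $\Leftrightarrow$ (iii) $\Rightarrow$ (iv) $\Rightarrow$ (v) $\Rightarrow$ (iii) using Lemma \ref{blow-up}. Your chaining mechanism in (iv) $\Rightarrow$ (v) via Barenblatt comparison functions with diverging mass is genuinely different from the paper's (which simply reuses Lemma \ref{blow-up}, exploiting the expansion from $B(x_0,r)$ to $B(x_0,4r)$), and the scaling does work out: with $\sigma\sim t-t_0$ the transported lower bound at a chained point is again of order $(t-t_0)^{-\frac{1}{m-1}}$, so this route is viable, though you only sketch the quantitative support/height bookkeeping that it requires.

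There are, however, two genuine gaps. First, in (ii) $\Rightarrow$ (iv) you apply Lemma \ref{Harnack} directly to $u$ at the times $t_k$. That lemma is stated for \emph{locally bounded weak supersolutions}, and under hypothesis (ii) the function $u$ is neither (the friendly giant is the model case); moreover $\vint_{B(x_0,\rho)}u(x,t_k)\,dx$ may well be $+\infty$ at the chosen times, so the intrinsic waiting time $\theta_k$ is not even defined. This is precisely the point of the paper's Lemma \ref{blow-up}: one works with the truncations $u_\lambda=\min\{u,\lambda\}$, which are weak supersolutions by \cite{kinnunenl08}, and calibrates the truncation level $\lambda_j$ so that the truncated average equals a prescribed value tied to a chosen target time; this fixes the intrinsic scale and lets one pass to the limit. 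Your argument can be repaired exactly by this device, but as written the key technical step is unjustified. Related to this, your appeal to lower semicontinuity to convert the a.e.\ bound on $Q_k$ into a pointwise bound on the initial slice $\overline{B(x_0,2\rho)}\times\{s_k\}$ goes in the wrong direction: lower semicontinuity gives $u(z)\le\liminf_{y\to z}u(y)$ and does \emph{not} upgrade an a.e.\ lower bound to an everywhere one; you would need the ess-liminf regularity of weak supersolutions (Avelin--Lukkari) for the truncations, or an argument that avoids pointwise data on a time slice.

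Second, your treatment of item (ii) leans on parts of Theorem \ref{class B} that the paper only establishes \emph{after} Theorem \ref{class M}: the equivalences of \ref{class B}(ii) and \ref{class B}(iv) with the rest are proved in Remark \ref{class B concluded} using \ref{class M}(iv). Concretely, your step (iii) $\Rightarrow$ (ii) invokes ``$u\in L^{m-1}_{\loc}$ implies \ref{class B}(iv)'', and your (i) $\Leftrightarrow$ (ii) invokes \ref{class B}(i) $\Leftrightarrow$ (ii); inserted into this paper, both are circular. (Your use of \ref{class B}(iv) in (ii) $\Rightarrow$ (iv) is harmless, since \ref{class B}(iv) $\Rightarrow$ (iv$'$) $\Rightarrow$ (i) $\Rightarrow$ (ii) uses only the part proved in Section \ref{sec:B} plus H\"older.) The fix is cheap and is what the paper does: deduce (ii) from (iv) directly, since $u(x,t)\ge c\,(t-t_0)^{-\frac1{m-1}}$ near $(x_0,t_0)$ forces $u\notin L^{m-1}_{\loc}$ because $\int_{t_0}^{t_0+\delta}(t-t_0)^{-1}\,dt=\infty$, and get (i) $\Leftrightarrow$ (ii) from this together with the trivial H\"older implication.
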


\begin{remark}
Assume that  (iii) in Theorem \ref{class M} does not hold and let $\alpha\in(0,1)$. 
Then
\[
\esssup_{t\in(\delta,T-\delta)} \int_D u(x,t)^\alpha \, dx
\le\left(\esssup_{t\in(\delta,T-\delta)} \int_D u(x,t) \, dx\right)^\alpha|D|^{1-\alpha}<\infty,
\] 
whenever $D\times(\delta,T-\delta)\Subset \Omega_T$.
This shows that \eqref{eq:ualpha} holds true and thus by Theorem \ref{class B} we conclude that $u\in\mathfrak{B}$.
\end{remark}

The following lemma will be useful for us.

\begin{lemma} \label{blow-up}
Assume that $u>0$ is an $m$-supercaloric function in $\Omega_{T}$ and let $t_0\in(0,T)$. 
Suppose that $B(x_0,8r)\subset\Omega$ and that there exists a sequence $t_j$ belonging to a dense subset of $(t_0,T)$, $j=1,2,\dots$, with $t_j\rightarrow t_0$ as $j\to\infty$, such that
\[
\lim_{j\rightarrow \infty} \int_{B(x_0,r)} u(x,t_j) \, dx =\infty.
\]  
Then there exists $C$, depending only on $n$ and $m$, such that
\[
u(x,t) \ge C\left ( \frac{r^2}{t-t_0}\right)^{\frac{1}{m-1}} 
\quad \text{for every}\quad (x,t)\in B(x_0,4r)\times(t_0,T).
\]
\end{lemma}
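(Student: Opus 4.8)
The plan is to turn the hypothesis on the mean values $\int_{B(x_0,r)} u(x,t_j)\,dx$ into a pointwise lower bound by invoking the weak Harnack inequality (Lemma \ref{Harnack}) on the truncations $u_k=\min\{u,k\}$ and then passing to the limit. First I would fix a time $t_j$ from the dense sequence and apply Lemma \ref{Harnack} to $u_k$ in the cylinder $B(x_0,8r)\times(t_j,T)$, which is legitimate since $u_k$ is a locally bounded positive weak supersolution and the spatial ball assumption $B(x_0,8r)\subset\Omega$ is exactly what the lemma needs. This yields, for a.e. $t_j$,
\[
\vint_{B(x_0,r)} u_k(x,t_j)\,dx \le \left(\frac{C_1 r^2}{T-t_j}\right)^{\frac{1}{m-1}} + C_2\,\essinf_{Q_{k,j}} u_k,
\]
where $Q_{k,j}=B(x_0,4r)\times(t_j+\tfrac{\theta_{k,j}}{2},t_j+\theta_{k,j})$ and $\theta_{k,j}=\min\{T-t_j,\,C_1 r^2(\vint_{B(x_0,r)} u_k(x,t_j)\,dx)^{-(m-1)}\}$. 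Letting $k\to\infty$ and using that $u_k\uparrow u$, the left side tends to $\vint_{B(x_0,r)} u(x,t_j)\,dx$, which by hypothesis goes to $\infty$ as $j\to\infty$; hence for $j$ large the first term on the right is negligible and the essential infimum of $u$ over $Q_{k,j}$ must blow up. Consequently $\theta_{k,j}$ is governed by $C_1 r^2(\vint u(x,t_j)\,dx)^{-(m-1)}\to 0$, so $\theta_{k,j}\to 0$; rearranging gives a lower bound of the form $u\ge C\,\theta_j^{-1/(m-1)} r^{2/(m-1)}$ on a shrinking cylinder sitting just above time $t_j$.

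The second step is to propagate this family of lower bounds on small cylinders above the times $t_j$ into a single lower bound valid on all of $B(x_0,4r)\times(t_0,T)$. Here I would fix an arbitrary point $(x,t)$ with $x\in B(x_0,4r)$ and $t\in(t_0,T)$ and choose $j$ large enough that $t_j$ is close to $t_0$ and $t_j+\theta_j < t$; one then needs to compare the value $u(x,t)$ with the established lower bound at a nearby slightly-earlier time. The natural tool is to build a friendly-giant-type subsolution $V(y,s)=U(y)(s-t_j+a_j)^{-1/(m-1)}$ on $B(x_0,4r)$, where $U$ solves the auxiliary elliptic equation \eqref{eq:elliptic-eq} with zero boundary data on $B(x_0,4r)$, choosing the shift $a_j$ so that at time $s=t_j+\theta_j$ the comparison $V(\cdot,t_j+\theta_j)\le u(\cdot,t_j+\theta_j)$ holds on $B(x_0,4r)$ (using the lower bound from step one together with the fact that $V$ vanishes on $\partial B(x_0,4r)$, so $u$ dominates $V$ on the lateral boundary for free). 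The comparison principle in the definition of $m$-supercaloricity then gives $u(y,s)\ge V(y,s)$ for all $(y,s)\in B(x_0,4r)\times(t_j+\theta_j,T)$, and letting $j\to\infty$ so that $t_j\to t_0$ and $a_j,\theta_j\to 0$ produces
\[
u(y,s)\ge \frac{U(y)}{(s-t_0)^{1/(m-1)}}
\]
on $B(x_0,4r)\times(t_0,T)$. Since $U$ is bounded below by a positive constant times $r^{2/(m-1)}$ on, say, $B(x_0,2r)$ after rescaling (and in general on compact subsets), this is of the claimed form $u(x,t)\ge C(r^2/(t-t_0))^{1/(m-1)}$; a covering/rescaling argument upgrades the bound from $B(x_0,2r)$ to $B(x_0,4r)$, or one simply works with $U$ on the larger ball from the start and notes $\|U\|$ scales like $r^{2/(m-1)}$.

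The main obstacle I anticipate is the interplay between the two limits — the truncation limit $k\to\infty$ and the time limit $j\to\infty$ — together with making the comparison-function shifts $a_j$ uniform enough that the final limit survives. Specifically, the weak Harnack estimate after letting $k\to\infty$ only controls $\essinf u$ over a cylinder $Q_{j}$ whose height $\theta_j$ itself depends on the (now infinite) mean value, so one must be careful that $Q_j$ does not degenerate before the information is extracted; the key quantitative point is that $\theta_j\sim r^2 (M_j)^{-(m-1)}$ with $M_j=\vint_{B(x_0,r)}u(x,t_j)\,dx\to\infty$, and simultaneously the lower bound $\essinf_{Q_j} u \gtrsim M_j$, so $\essinf_{Q_j} u \cdot \theta_j^{1/(m-1)}\gtrsim M_j\cdot (r^2 M_j^{-(m-1)})^{1/(m-1)} = r^{2/(m-1)}$, which is exactly the scale-invariant quantity that must persist. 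Checking that this combination is genuinely bounded below independently of $j$, and that the resulting subsolution $V_j$ can be compared against $u$ on the full parabolic boundary of $B(x_0,4r)\times(t_j+\theta_j,T)$, is the technical heart of the argument; the rest is bookkeeping with the comparison principle and the scaling of $U$.
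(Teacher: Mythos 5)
Your first step (weak Harnack applied to truncations at the times $t_j$) is the same starting point as the paper, and your scale-invariance observation $\essinf_{Q}u\cdot\theta^{1/(m-1)}\gtrsim r^{2/(m-1)}$ is the right quantitative heart of the matter. But the second step, where you propagate the bound by comparison with a friendly-giant solution $V_j(y,s)=U(y)(s-t_j+a_j)^{-1/(m-1)}$ on $B(x_0,4r)$, has two genuine gaps. First, this comparison can only yield $u(y,s)\ge U(y)(s-t_0)^{-1/(m-1)}$, and $U$ vanishes on $\partial B(x_0,4r)$, so the bound degenerates exactly where the lemma still claims the uniform estimate $C\bigl(r^2/(t-t_0)\bigr)^{1/(m-1)}$; your fix of ``working with $U$ on a larger ball from the start'' does not work, because the weak Harnack estimate only controls $\essinf u$ over $B(x_0,4r)$, so outside that ball you have no initial-slice lower bound for $u$ and the comparison hypothesis fails, while the covering fix requires re-running the entire argument at shifted centers (using the interior bound to verify the mass blow-up hypothesis at the new centers) -- a nontrivial chaining, not bookkeeping. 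Second, the comparison principle built into the definition of $m$-supercaloric functions requires $u\ge V_j$ pointwise on the parabolic boundary, whereas the weak Harnack lemma only gives an essential infimum bound, i.e.\ an a.e.\ inequality on the initial slice; to make the comparison legitimate you would have to switch to the comparison principle for weak supersolutions applied to a truncation $u_\lambda$ versus $V_j$, which your proposal does not address. There is also a soft spot in the order of limits: if $\int_{B(x_0,r)}u(x,t_j)\,dx=\infty$ for the given $t_j$, letting the truncation level go to infinity first collapses the Harnack cylinders before any information is extracted, so one must keep the truncation level finite and coupled to the geometry.

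The paper avoids all of this with one device that is missing from your proposal: for a fixed $s>t_0$ it chooses the truncation level $\lambda_j$ so that $\vint_{B(x_0,r)}u_{\lambda_j}(x,t_j)\,dx=2\bigl(C_1r^2/(s-t_0)\bigr)^{1/(m-1)}$ exactly. Then $\theta_j=\min\{s-t_j,\tfrac{s-t_0}{2^{m-1}}\}$ is comparable to $s-t_0$, so the Harnack cylinders do not shrink; letting $j\to\infty$ gives $u\ge\frac1{C_2}\bigl(C_1r^2/(s-t_0)\bigr)^{1/(m-1)}$ on the fixed cylinder $B(x_0,4r)\times\bigl(t_0+\tfrac{s-t_0}{2^m},t_0+\tfrac{s-t_0}{2^{m-1}}\bigr)$, and sweeping $s\in(t_0,T)$ covers every $(x,t)\in B(x_0,4r)\times(t_0,T)$. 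No comparison function is needed, the bound is automatically uniform on all of $B(x_0,4r)$, and the a.e./pointwise issue never arises. I would recommend reworking your argument along these lines rather than patching the friendly-giant step.
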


\begin{proof}
Denote 
\[
u_\lambda(x,t)=\min \{u(x,t), \lambda \}
\quad\text{with}\quad \lambda>0.
\] 
By \cite[Theorem 3.2]{kinnunenl08} $u_\lambda$ is a weak supersolution in $\Omega_T$ for every $\lambda>0$. 
Let $s>t_0$ to be chosen so that $s-t_0$ is small enough. 
We assume that the times $t_j\in(t_0,T)$, $j=1,2,\dots$ belong to the dense subset of $(0,T)$ where Lemma \ref{Harnack} is applicable. Furthermore, we may assume that
\[
\vint_{B(x_0,r)} u(x,t_j) \, dx > 2\left (\frac{C_1r^2}{s-t_0}\right)^{\frac 1{m-1}}.
\]
Here $C_1$ is the constant  in Lemma \ref{Harnack}. 
Choose $\lambda_j$ such that 
\[
\vint_{B(x_0,r)} u_{\lambda_j}(x,t_j) \, dx = 2\left (\frac{C_1r^2}{s-t_0}\right)^{\frac 1{m-1}}.
\]
Apply Lemma \ref{Harnack} to $u_{\lambda_j}$ at time $t_j$ to obtain
\[
2\left (\frac{C_1r^2}{s-t_0}\right)^{\frac 1{m-1}} 
\le \left ( \frac {C_1 r^2}{s-t_j}\right)^{\frac 1 {m-1}} + C_2 \inf_{Q_j} u_{\lambda_j},
\]
where 
\[Q_j= B(x_0,4r)\times \left(t_j+\frac{\theta_j}2 , t_j+\theta_j\right)
\quad\text{and}\quad \theta_j=\min \left \{s-t_j, \frac{s-t_0}{2^{m-1}} \right\},
\] 
$j=1,2,\dots$. 
By letting $j\rightarrow \infty$, we have
\begin{equation}\label{eq:harnack lower bound}
u(x,t)\ge\frac1{C_2}\left( \frac {C_1 r^2 }{s-t_0}  \right)^{\frac 1 {m-1}}
\ge\frac1{C_2}\left( \frac {C_1 r^2 }{2^m(t-t_0)}  \right)^{\frac 1 {m-1}}
\end{equation}
for every $(x,t)\in B(x_0,4r)\times\left (t_0+\frac{s-t_0}{2^m},t_0+\frac{s-t_0}{2^{m-1}}\right)$.
Finally we observe that for every $t\in (t_0,T)$ we may choose $s>t_0$ such that $t\in\left (t_0+\frac{s-t_0}{2^m},t_0+\frac{s-t_0}{2^{m-1}}\right)$ and thus
\eqref{eq:harnack lower bound} holds for every $ (x,t)\in B(x_0,4r)\times (t_0,T)$.
\end{proof}
 
\begin{proof}[Proof of Theorem \ref{class M}]
We show that (i) $\Longleftrightarrow$ (iii)  and (iii) $\Longrightarrow$ (iv) $\Longrightarrow$ (v) $\Longrightarrow$ (iii).
For the remaining equivalences, see Remark \ref{class B concluded}.

The claim that (i) and (iii) are equivalent follows from Theorem \ref{class B} and Lemma \ref{blow-up}.

We show that (iii) implies (iv). Assume that 
\[
\esssup_{t\in(\delta,T-\delta)} \int_{B(x_0,r)} u(x,t) \, dx = \infty.
\] 
Then we may choose a sequence 
$t_j$, $j=1,2,\dots$ belonging to the dense subset of $(0,T)$ where Lemma \ref{blow-up} is applicable, with $t_j\rightarrow t_0$ as $j\to\infty$, such that
\[
\lim_{j\rightarrow \infty} \int_{B(x_0,r)} u(x,t_j) \, dx = \infty.
\]
By Lemma \ref{blow-up} 
\[
u(x,t) \ge C\left ( \frac {r^2}{t-t_0}\right)^{\frac 1 {m-1}} 
\quad \text{for every}\quad (x,t)\in B(x_0,4r)\times (t_0,T).
\]
This implies \eqref{eq:liminf bound}.

Then we show that (iv) implies (v). Assume that there exists $(x_0,t_0)\in\Omega_T$ such that  \eqref{eq:liminf bound} holds.
Then there exist $r>0$, $\delta>0$ and $\eps>0$ such that 
\[
(t-t_0)^{\frac 1 {m-1}} u(x,t) \ge \eps
\quad\text{for every}\quad (x,t)\in B(x_0,r)\times(t_0,t_0+\delta).
\]
In particular
\[
\vint_{B(x_0,r)} u(x,t)\, dx \ge \eps (t-t_0)^{-\frac1 {m-1}} 
\quad\text{for every}\quad 
t\in (t_0,t_0+\delta).
\]
Lemma \ref{blow-up} shows that 
\[
u(x,t)\ge C\left( \frac {r^2}{t-t_0}\right)^{\frac 1 {m-1}}
\quad \text{for every}\quad 
(x,t)\in B(x_0,4r)\times(t_0,t_0+\delta).
\]  
Thus $B(x_0,4r)\subset \Xi^\perp(t_0)$, where
\[
\Xi^{\perp}(t_0)=\Big\{x_0\in\Omega : \lim_{{\substack{(x,t) \to (x_0,t_0)\\t>t_0}}} u(x,t)=\infty \Big\}.
\]
We may repeat the same argument for any ball intersecting $B(x_0,4r)$. 
Therefore, choosing a suitable chain of balls, we can reach any point in $\Omega$ and conclude that $\Xi^\perp(t_0)=\Omega$. 

Finally we show that (v) implies (iii).
If $\Xi^\perp(t_0)=\Omega$ for some $t_0\in (0,T)$, we have
\[
\int_D u(x,t)\, dx \rightarrow \infty \quad \text{as}\quad t\rightarrow t_0+,
\]
for every set $D\Subset \Omega$ with $|D|>0$. 
Hence there is $\delta>0$ such that 
\[
\esssup_{t\in(\delta,T-\delta)} \int_D u(x,t) \, dx = \infty.
\] 
\end{proof}

\begin{remark}
The friendly giant plays an important role as a minorant for $m$-supercaloric functions which blow up at time $t_0$.
Assume that $u$ is a non-negative $m$-supercaloric function in $\Omega_T$ with the property that
\begin{equation*}
\lim_{\substack{(y,t) \to (x,0)\\t>0}}u(y,t)=\infty
\quad\text{for every}\quad x \in \Omega.
\end{equation*}
Let $\sigma > 0$. The comparison principle gives
\[
u(x,t)\geq U(x) (t+\sigma)^{-\frac{1}{m-1}}\quad \text{for every}\quad (x,t)\in\Omega_{T-\sigma},
\]
where $U$ is a solution to \eqref{eq:elliptic-eq} as in the construction of the friendly giant. 
By letting $\sigma \to 0$, we have 
\[
u(x,t)\geq U(x) t^{-\frac{1}{m-1}}
\quad\text{for every}\quad (x,t)\in\Omega_T.
\]
In particular, 
\[
\liminf_{\substack{(y,t) \to (x,0)\\t>0}}u(y,t)t^{\frac{1}{m-1}}> 0
\quad\text{for every}\quad x\in\Omega.
\]
This shows that an $m$-supercaloric function, with infinite initial values on the whole time slice $\Omega\times\{0\}$, blows up at a rate greater or equal to $t^{\frac 1 {m-1}}$,  see  \cite [p. 111--114]{vazquez07}. 
\end{remark}

The next example shows that an $m$-supercaloric function may blow up faster than the friendly giant.

\begin{example}
Let 
\[
V(x,t)=U(x)e^{\frac 1 {(m-1)t}}, \quad t>0.
\]
Here $U$ is a solution to \eqref{eq:elliptic-eq} as in the construction of the friendly giant. 
We will show that $V$ is a supersolution. A straightforward computation gives
\[
V_t(x,t)-\Delta (V(x,t)^m) 
=e^{\frac 1{(m-1)t}}\left(e^{\frac1t} - \frac 1 {t^2} \right) \frac {U(x)}{m-1} \ge 0.
\]
In a similar manner, we can construct supersolutions that blow up even faster. Let $f : (0,\infty) \rightarrow (0,\infty)$ and define
\[
V(x,t) = U(x)e^{\frac {f(t)} {m-1}}, \quad t>0.
\]
Then $V$ is a supersolution, if $f$ satisfies $f'(t)+e^{f(t)} \ge 0$.
By choosing $f$ in an appropriate way, we see that for any $\varepsilon>0$, we have an $m$-supercaloric function $V$ for which %
$V\notin L^\varepsilon_{\loc}(\Omega_T)$ and $\nabla V\notin L^\varepsilon_{\loc}(\Omega_T)$.
\end{example}

Next we give an explicit example of the dichotomy between classes $\mathfrak{B}$ and $\mathfrak{M}$ by constructing an $m$-supercaloric function as a limit of a sequence of solutions to initial value problems. Depending on the choice of the initial values, the solutions either converge to a Barenblatt type solution, or the limit solution blows up at a rate of the friendly giant. 

\begin{example} 
For $k=1,2,\dots$, consider a weak solution with zero lateral boundary values to the problem
\[
\begin{cases}
\partial_t u_k-\Delta( u_k^m)=0
\quad\text{in}\quad B(0,1)\times(0,\infty),\\
u_k(x,0)=a_k \chi_{ B(0,\frac1k)}
\quad\text{for every}\quad x\in B(0,1).
\end{cases}
\]
Set
\[
v(x,t)=\frac{u_k(x,a_k^{1-m}t)}{a_k},
\]
where $a_k$ are to be chosen later.
The function $v$ satisfies
\[
\begin{cases}
\partial_t v-\Delta(v^m)=0
\quad\text{in}\quad B(0,1)\times(0,\infty),\\
v(x,0)= \chi_{B(0,\frac1k)}(x)
\quad\text{for every}\quad x\in B(0,1).
\end{cases}
\]
Our aim is to compare $v$ to the Barenblatt solution in a suitable space-time cylinder. Let
\[
\begin{split}
\B(x,t)=(t+t_0)^{-\lambda}\left(C-\frac{\lambda(m-1)}{2mn}\frac{\abs{x}^2}{(t+t_0)^{\frac{2\lambda}n}}\right)_+^{\frac1{m-1}},
\end{split}
\]
where $\lambda =\frac{n}{n(m-1)+2}$. We choose
\[
C=C_0\frac{\lambda(m-1)}{2mn}\frac{1}{k^{\frac{2\beta\lambda}n}}
\quad\text{and}\quad
t_0=C_0^{-\frac n{2\lambda}}k^{\beta -\frac n\lambda}.
\]
Here $\beta=(m-1)n$ and $C_0$ is chosen in such a way that $\B(0,0)\le 1$.

For $x\in \bd B\left(0, \frac 1k\right)$, we have 
\[
\B(x,0)=t_0^{-\lambda}\left(C-\frac{\lambda(m-1)}{2mn}\frac{k^{-2}}{t_0^{\frac{2\lambda}n}}\right)_+^{\frac1{m-1}}=0,
\]
since 
\[
C-\frac{\lambda(m-1)}{2mn}\frac{k^{-2}}{t_0^{\frac{2\lambda}n}}= C_0\frac{\lambda(m-1)}{2mn} 
\left(\frac{1}{k^{\frac{2\beta\lambda}n}}-\frac{k^{-2}}{k^{(\beta - \frac n\lambda)\frac{2\lambda}n}}\right) =0.
\]
Then $\B(\cdot,0) \le 1$ in $B\left(0,\frac 1k\right)$ and
$\B(\cdot,0)=0$ in $B(0,1)\setminus B\left(0,\frac 1k\right)$, 
which implies $\B\le v$ in $B(0,1)\times \{0\}$. 
Next, we want to find maximal $\theta>0$ such that $\B$ takes zero lateral boundary values in $B(0,1)\times (0,\theta)$. 
By solving $\theta$ from
 \[
C-\frac{\lambda(m-1)}{2mn}\frac{1}{(\theta+t_0)^{\frac{2\lambda}n}}=0
\] 
we have
\[
\theta = C_0^{-\frac n{2\lambda}}k^\beta (1-k^{\frac n \lambda}).
\]
Then, by the comparison principle, $v\ge \B$ in $B(0,1)\times (0,\theta)$. 
We observe that 
\[
\B(x,\theta) \ge c k^{-\lambda \beta}C^{\frac 1 {m-1}} = c k^{-\lambda \beta \left( 1+ \frac 2 {n(m-1)} \right)} \quad \text{for } x\in B\left(0, \tfrac 14\right).
\]
Here $c=c(C_0,m,n)$. By switching back to the original variables we arrive at
\[
u_k(x,\theta a_k^{1-m})\ge c a_kk^{-\lambda \beta\left(1+\frac2{n(m-1)}\right)} = ca_k k^{\frac{-\beta}{m-1}}.
\] 
Choosing $T=\theta a_k^{1-m}$ gives
\[
u_k(x,T)\ge c T^{-\frac1{m-1}}.
\]
We consider two cases. If 
\[
\frac{k^\beta} {a_k^{m-1}}= \left(\frac{k^n}{a_k}\right)^{m-1} \rightarrow 0
\]
 as $k\rightarrow \infty$, we have 
\[
u(x,T)\ge c T^{-\frac1{m-1}} \quad \text{for every } T>0,
\]
and therefore $u$ is in class $\mathfrak{M}$. 

On the other hand, if
 \[
\frac{a_k}{k^n}\to a<\infty,
\]
 as $k\rightarrow \infty$, then 
\[
\int_{B(0,1)}a_k \chi_{B(0,\frac1k)}(x) \ph(x) \, dx \to  a \ph(0)
\]
for all $\ph\in C_0^\infty (B(0,1))$, showing that $u$ attains the initial value $a \delta$. Thus $u$ is  a Barenblatt type solution, which implies that $u$ is in class $\mathfrak{B}$. 
\end{example}

\section{Infinity sets}\label{sec: infinities}
Assume that $u>0$ is an $m$-supercaloric function in $\Omega_{T}$. 
We consider two sets at time $t_0\in (0,T)$. 
We recall the infinity set that we already encountered in the proof of Theorem \ref{class M} defined as 
\[
\Xi^{\perp}(t_0)=\Big\{x_0\in\Omega : \lim_{{\substack{(x,t) \to (x_0,t_0)\\t>t_0}}} u(x,t)=\infty \Big\}.
\]
In addition, we consider yet another infinity set 
\[
\Xi^{\downarrow}(t_0)=\Big\{x_0\in\Omega: \lim_{{\substack{t \to t_0\\t>t_0}}} u(x_0,t)=\infty\Big\}.
\]
The difference is that in the latter set the limit is taken vertically.
For both sets the times $t\le t_0$ are excluded in the limit procedure.
Clearly $\Xi^{\perp}(t_0) \subset \Xi^{\downarrow}(t_0)$, but the sets are not necessarily same. This can be seen by considering the Barenblatt solution. In this case $\Xi^{\perp}(0)=\emptyset$, but  $\Xi^{\downarrow}(0)=\{0\}$. 
There is an interesting phenomenon: even though the sets may be different, either they both are of $n$-dimensional measure zero, or they occupy the whole time slice. Moreover, the latter alternative cannot occur for $\Omega=\R^n$.

\begin{theorem}\label{infinities}
Assume that $u$ is a positive $m$-supercaloric function in $\Omega_{T}$. 
Then for every $t\in(0,T)$ there are two alternatives: 
\[
\text{either}\quad |\Xi^{\downarrow}(t)| = |\Xi^{\perp}(t)|=0
\quad\text{or}\quad \Xi^{\downarrow}(t)=\Xi^{\perp}(t)=\Omega.
\]
\end{theorem}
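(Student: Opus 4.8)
The dichotomy should follow by combining Theorem \ref{class M} with the local nature of the blow-up. The key observation is that the condition $|\Xi^{\downarrow}(t_0)|>0$ (or $|\Xi^{\perp}(t_0)|>0$) forces, via the weak Harnack estimate, that $u\notin L^{m-1}_{\loc}(\Omega_T)$, which by Theorem \ref{class M} puts $u$ in class $\mathfrak M$, and then property (v) of that theorem gives $\Xi^{\perp}(t_0)=\Omega$, hence also $\Xi^{\downarrow}(t_0)=\Omega$ since $\Xi^{\perp}(t_0)\subset\Xi^{\downarrow}(t_0)$. So the whole point is to prove the implication: if either infinity set has positive measure at time $t_0$, then $u\notin L^{m-1}_{\loc}(\Omega_T)$.

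First I would handle $\Xi^{\downarrow}(t_0)$ and deduce the $\Xi^{\perp}(t_0)$ case from the inclusion. Suppose $|\Xi^{\downarrow}(t_0)|>0$. By Lebesgue density, pick a ball $B(x_0,8r)\Subset\Omega$ with $|\Xi^{\downarrow}(t_0)\cap B(x_0,r)|>\tfrac12|B(x_0,r)|$. For each $x\in\Xi^{\downarrow}(t_0)$ we have $u(x,t)\to\infty$ as $t\to t_0+$; hence for every $N$ and every such $x$ there is $\tau_x>t_0$ with $u(x,t)>N$ for $t\in(t_0,\tau_x)$. An Egorov/Fatou argument — or more cleanly, for fixed small $s>t_0$ the set $\{x\in B(x_0,r): u(x,s)>N\}$ has measure tending (as $s\to t_0+$) to a set of measure $\ge\tfrac12|B(x_0,r)|$ along a sequence; using that $u(\cdot,s)$ is measurable and lower semicontinuous — yields a sequence $t_j\downarrow t_0$ in the dense set where Lemma \ref{Harnack} and Lemma \ref{blow-up} apply, with
\[
\int_{B(x_0,r)} u(x,t_j)\,dx \;\longrightarrow\; \infty.
\]
Then Lemma \ref{blow-up} gives
\[
u(x,t)\ge C\left(\frac{r^2}{t-t_0}\right)^{\frac1{m-1}}\quad\text{for every }(x,t)\in B(x_0,4r)\times(t_0,T),
\]
and the right-hand side is not in $L^{m-1}$ near $t=t_0$ (the integral of $(t-t_0)^{-1}$ diverges), so $u\notin L^{m-1}_{\loc}(\Omega_T)$. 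By Theorem \ref{class M}, $u\in\mathfrak M$, so property (v) of Theorem \ref{class M} gives a time $t_1$ with $\Xi^{\perp}(t_1)=\Omega$; but in fact the argument localizes $t_1=t_0$ since the lower bound above already shows $B(x_0,4r)\subset\Xi^{\perp}(t_0)$, and then the chain-of-balls argument in the proof of Theorem \ref{class M} (the step "(iv) implies (v)") propagates $\Xi^{\perp}(t_0)=\Omega$. Hence $\Xi^{\downarrow}(t_0)=\Xi^{\perp}(t_0)=\Omega$. If instead $|\Xi^{\downarrow}(t_0)|=0$, then since $\Xi^{\perp}(t_0)\subset\Xi^{\downarrow}(t_0)$ we also get $|\Xi^{\perp}(t_0)|=0$, which is the first alternative.

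The main obstacle is the measure-theoretic bridge in the middle: passing from "$u(x,t)\to\infty$ as $t\to t_0+$ for a.e.\ $x$ in a ball" to "$\int_{B(x_0,r)}u(x,t_j)\,dx\to\infty$ along times $t_j$ in the prescribed dense set." One has to be careful that Lemma \ref{Harnack} and Lemma \ref{blow-up} require $t_j$ in a specific full-measure subset of $(t_0,T)$; since that subset is dense and of full measure, and since for each $N$ the measure of $\{x\in B(x_0,r):u(x,t)>N\}$ stays bounded below by $\tfrac12|B(x_0,r)|$ for all $t$ in a right-neighbourhood of $t_0$ after discarding a null set of times (a consequence of lower semicontinuity of $u$ together with pointwise convergence to $\infty$ on a set of positive measure, via Fatou applied to $\mathbf 1_{\{u(\cdot,t)>N\}}$), one can intersect with the good set of times and extract the desired sequence $t_j$; then $\int_{B(x_0,r)}u(x,t_j)\,dx\ge \tfrac N2|B(x_0,r)|\to\infty$. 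Once this is in place, everything else is a direct citation of Lemma \ref{blow-up} and Theorem \ref{class M}.
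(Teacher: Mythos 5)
Your proposal is correct and follows essentially the same route as the paper: a measure-theoretic step (your Fatou argument on $\mathbf 1_{\{u(\cdot,t)>N\}}$ plays the role of the paper's slice sets $E_k(t)$) yielding $\int_{B(x_0,r)}u(x,t_j)\,dx\to\infty$ along admissible times $t_j\downarrow t_0$, then Lemma \ref{blow-up} to get $B(x_0,4r)\subset\Xi^{\perp}(t_0)$, and a chaining argument to reach all of $\Omega$. The detour through $u\notin L^{m-1}_{\loc}(\Omega_T)$ and Theorem \ref{class M}(v) is superfluous (and alone would only give blow-up at some time $t_1$), but you correctly note that the local lower bound from Lemma \ref{blow-up} already localizes the conclusion at $t_0$, which is exactly what the paper does.
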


\begin{proof}
Let $t_0\in (0,T)$. Since $\Xi^\perp (t_0) \subset \Xi^\downarrow (t_0)$, it suffices to show, that if $|\Xi^\downarrow(t_0)|>0$, then $\Xi^\perp(t_0)=\Omega$. Suppose that $|\Xi^\downarrow(t_0)|>0$. 
Then there exist $x_0\in \Omega$ and $r>0$ such that $B(x_0,8r)\subset \Omega$ and $|\Xi^\downarrow (t_0)\cap B(x_0,r)|>0$. 
Let $k=1,2,\dots$ and $x\in \Xi^\downarrow(t_0)$. 
By definition of the set $\Xi^\downarrow(t_0)$, there exists $t_x^k \in (t_0,T)$ such that $u(x,t) > k$ for every  $t\in (t_0, t_x^k)$.
Let 
\[
E_k=\bigcup\left\{\{x\}\times(0,t_x^k):x\in\Xi^\downarrow (t_0)\right\}
\]
and 
\[
E_k(t)=\{x\in B(x_0,r) : (x,t)\in E_k \},  
\quad t\in (t_0,T).
\]
Observe, that $E_k(t)$ is the projection of $E_k$ to $B(x_0,r)$ and $E_k(t)\subset\Xi^\downarrow (t_0)$.
It is clear that
\[
\Xi^\downarrow (t_0)\cap B(x_0,r) = \bigcap_{k=1}^\infty \bigcup_{j=1}^\infty E_k \left( \frac 1 j \right). 
\]
For a fixed $k$, $E_k(\frac1j)$, $j=1,2,\dots$, is a monotonically increasing sequence of sets. 
Thus 
\[
|\Xi^\downarrow (t_0)\cap B(x_0,r)| 
\le \left | \bigcup_{j=1}^\infty E_k \left( \frac 1 j \right) \right | 
= \lim_{j\rightarrow \infty} \left | E_k \left( \frac 1 j \right) \right|. 
\]
Consequently, there exists an index $j_k$ such that 
\begin{align*}
|E_k(t)|
\ge\left| E_k \left( \frac1{j_k} \right) \right | 
\ge \frac 12 |\Xi^\downarrow (t_0)\cap B(x_0,r)|>0
\quad \text{for every}\quad t\in\left(t_0,\frac1{j_k} \right).
\end{align*}
We may choose a time $t_j\in\left(t_0,\frac1{j_k} \right)$ for every $j=1,2,\dots$ as in Lemma \ref{blow-up} and conclude
\[
\int_{B(x_0,r)} u(x,t_j)\, dx \ge \int_{E_j(t_j)} u(x,t_j)\, dx 
\ge\frac j2 |\Xi^\downarrow (t_0)\cap B(x_0,r)| \rightarrow \infty,
\]
as $j\rightarrow \infty$. 
By Lemma \ref{blow-up} we have $B(x_0,2r)\subset \Xi^\perp(t_0)$. 
Thus the infinity set has expanded.
Finally a chaining  shows that  $\Xi^\perp (t_0)=\Omega$. 
\end{proof}

As a consequence, we obtain characterizations for classes $\mathfrak{B}$ and $\mathfrak{M}$ in terms of the infinity sets. 

\begin{corollary}
Assume that $u$ is a positive $m$-supercaloric function in $\Omega_T$. Then
\begin{itemize}
\item[(i)] $u\in \mathfrak{B}$ if and only if $|\Xi^\downarrow (t)|=0$ for every $t\in(0,T)$ and
\item[(ii)]  $u\in \mathfrak{M}$ if and only if $|\Xi^\downarrow (t)|>0$ for some $t\in(0,T)$.
\end{itemize}
The corresponding claims also hold true for $\Xi^{\perp}(t)$.
\end{corollary}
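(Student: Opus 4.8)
The plan is to assemble the two characterization theorems together with Theorem \ref{infinities}; there is little new content, so the work is bookkeeping of quantifiers across time slices. The starting observation is that the classes $\mathfrak{B}$ and $\mathfrak{M}$ are complementary by definition: a positive $m$-supercaloric function $u$ satisfies either $u\in L^{m-1}_{\loc}(\Omega_T)$, in which case $u\in\mathfrak{B}$, or $u\notin L^{m-1}_{\loc}(\Omega_T)$, in which case $u\in\mathfrak{M}$, with no third possibility. Hence it suffices to prove one of (i), (ii) and obtain the other by passing to complements; I will establish (ii).

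For (ii), suppose first that $u\in\mathfrak{M}$. By Theorem \ref{class M} (the equivalence (ii)$\Longleftrightarrow$(v)) there is a time $t_0\in(0,T)$ with
\[
\lim_{\substack{(x,t)\to(x_0,t_0)\\ t>t_0}}u(x,t)=\infty\quad\text{for every }x_0\in\Omega,
\]
which says precisely that $\Xi^{\perp}(t_0)=\Omega$; hence $|\Xi^{\perp}(t_0)|>0$, and since $\Xi^{\perp}(t_0)\subset\Xi^{\downarrow}(t_0)$ also $|\Xi^{\downarrow}(t_0)|>0$. Conversely, assume $|\Xi^{\downarrow}(t_0)|>0$ for some $t_0\in(0,T)$. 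Then Theorem \ref{infinities} forces $\Xi^{\perp}(t_0)=\Omega$, which is exactly condition (v) of Theorem \ref{class M}, so $u\in\mathfrak{M}$. This proves (ii). Part (i) is then immediate: ``$|\Xi^{\downarrow}(t)|=0$ for every $t\in(0,T)$'' is the negation of ``$|\Xi^{\downarrow}(t)|>0$ for some $t\in(0,T)$'', and $\mathfrak{B}$ is the complement of $\mathfrak{M}$ within the positive $m$-supercaloric functions.

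Finally, the assertion for $\Xi^{\perp}$ follows directly from Theorem \ref{infinities}: for each $t\in(0,T)$ the quantities $|\Xi^{\downarrow}(t)|$ and $|\Xi^{\perp}(t)|$ are simultaneously zero or simultaneously positive (both equal to $|\Omega|$), so ``$|\Xi^{\downarrow}(t)|=0$ for every $t$'' holds if and only if ``$|\Xi^{\perp}(t)|=0$ for every $t$'', and likewise for the ``some $t$'' versions; replacing $\Xi^{\downarrow}$ by $\Xi^{\perp}$ in (i) and (ii) therefore changes nothing. I do not expect a genuine obstacle here; the only care needed is to keep the existential and universal quantifiers over time slices straight and to invoke the correct equivalence (namely (ii)$\Longleftrightarrow$(v)) from Theorem \ref{class M}.
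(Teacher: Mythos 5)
Your proof is correct and follows essentially the same route as the paper: claim (ii) is obtained by combining the equivalence with condition (v) of Theorem \ref{class M} (i.e.\ $\Xi^{\perp}(t_0)=\Omega$ for some $t_0$) with the dichotomy of Theorem \ref{infinities}, and claim (i) then follows because $\mathfrak{B}$ and $\mathfrak{M}$ are mutually exclusive. The paper's own proof is just a compressed version of this same argument, so no further comparison is needed.
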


\begin{proof}
Claim (ii) is a restatement of Theorem \ref{class M} (v) by taking into account Theorem \ref{infinities}. 
Claim (i) follows immediately since classes $\mathfrak{B}$ and $\mathfrak{M}$ are mutually exclusive.
\end{proof}

\begin{remark}\label{class B concluded}
We show the equivalence of  (i) and (ii) in Theorem \ref{class B}. 
It is clear that (i) implies (ii). We show the opposite implication by contradiction.  
Suppose that $u\notin\mathcal{B}$. Then $u\in\mathfrak{M}$. 
By claim (iv) in Theorem \ref{class M}, there exists $(x_0,t_0)\in \Omega_T$, such that 
\[
u(x,t) \ge C (t-t_0)^{-\frac1{m-1}} \quad \text{in a neighbourhood of }(x_0,t_0).
\]
This implies $u\notin L^{m-1}_{loc}(\Omega_T)$. 
By a similar reasoning, we can include the endpoint $\alpha=1$ in \eqref{eq:ualpha}, 
concluding the equivalence of  (iv) and (iv') in Theorem \ref{class B}. By H\"older's inequality (iv) implies (iv'). Again, we shall show the opposite implication by contradiction. If (iv) does not hold, Theorem \ref{class M} implies that $u\in \mathfrak{M}$ and therefore $u\notin \mathfrak{B}$. Thus (iv') does not hold.
\end{remark}

Finally, we show that $m$-supercaloric functions with infinity sets of non-zero measure exist only in the case when the domain $\Omega$ is bounded. 
Here the global bound $u>0$ is decisive.

\begin{theorem}
\label{none}
If  $u:\R^n \times (0,T) \to (0,\infty]$ is $m$-supercaloric, then $u\in\mathfrak{B}$.  
\end{theorem}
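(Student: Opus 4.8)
The plan is to argue by contradiction, using the friendly giant as a minorant together with a scaling that is available precisely because the spatial domain is all of $\R^n$. Recall that every positive $m$-supercaloric function belongs to exactly one of the mutually exclusive classes $\mathfrak B$ (characterized by $u\in L^{m-1}_{\loc}$) and $\mathfrak M$ (characterized by $u\notin L^{m-1}_{\loc}$); hence it suffices to prove $u\notin\mathfrak M$. So I would suppose $u\in\mathfrak M$ and invoke Theorem \ref{class M}: part (v) produces a time $t_0\in(0,T)$ with
\[
\lim_{\substack{(x,t)\to(x_0,t_0)\\ t>t_0}}u(x,t)=\infty\qquad\text{for every }x_0\in\R^n,
\]
so $u$ blows up on the whole slice $\{t=t_0\}$.

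Next I would exploit the self-similarity of the friendly giant on balls. For each $R>0$ let $U_R>0$ denote the solution of \eqref{eq:elliptic-eq} with zero boundary values in $B(0,R)$, continuous up to the boundary. A one-line computation shows $U_R(x)=R^{2/(m-1)}U_1(x/R)$, because for $m>1$ the two terms in \eqref{eq:elliptic-eq} scale with the same power. Now I would run, on each bounded ball $B(0,R)$, the same comparison with the friendly giant that minorizes $m$-supercaloric functions blowing up on a full time slice (cf.\ the corresponding remark in Section~\ref{sec:M} and \cite{vazquez07}): comparing $u$ with the shifted giants $(x,t)\mapsto U_R(x)\,(t-t_0+\sigma)^{-1/(m-1)}$ on subcylinders of $B(0,R)\times(t_0,T)$ and letting $\sigma\to0^+$ yields
\[
u(x,t)\ \ge\ U_R(x)\,(t-t_0)^{-\frac1{m-1}}\qquad\text{for all }(x,t)\in B(0,R)\times(t_0,T).
\]

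Then comes the punchline: fix any $(x_1,t_1)\in\R^n\times(t_0,T)$ and apply the last inequality for every $R>|x_1|$, so that $u(x_1,t_1)\ge R^{2/(m-1)}\,U_1(x_1/R)\,(t_1-t_0)^{-1/(m-1)}$. Letting $R\to\infty$, the factor $U_1(x_1/R)$ converges to $U_1(0)>0$ by continuity and strict positivity of $U_1$ in $B(0,1)$, while $R^{2/(m-1)}\to\infty$; hence $u(x_1,t_1)=\infty$. Since $(x_1,t_1)$ was arbitrary, $u\equiv\infty$ on the nonempty open set $\R^n\times(t_0,T)$, contradicting the requirement that an $m$-supercaloric function be finite on a dense subset of its domain. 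Therefore $u\notin\mathfrak M$, i.e.\ $u\in\mathfrak B$.

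The step I expect to be the main obstacle is making the comparison with the friendly giant on each $B(0,R)$ fully rigorous. One must promote the pointwise blow-up at $t_0$ to a locally uniform blow-up near $t_0^+$ (by lower semicontinuity and a finite cover of $\overline{B(0,R)}$), so that $u$ dominates the bounded giant on the parabolic boundary of the approximating cylinders $B(0,R)\times(t_0+\eta,\tau)$ — here the zero lateral boundary values of $U_R$ are compared against $u\ge0$, which is exactly where the standing sign condition $u>0$ is used — and then one passes to the limit $\sigma\to0^+$. This is essentially the content of the minorant argument already recorded in Section~\ref{sec:M}, so it can be quoted rather than redone; the genuinely new ingredient is the scaling identity $U_R=R^{2/(m-1)}U_1(\cdot/R)$ and the divergence it forces as $R\to\infty$, which reflects the nonexistence of any friendly giant on the unbounded domain $\R^n$.
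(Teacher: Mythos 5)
Your proposal is correct and follows essentially the same route as the paper: assume blow-up on a full time slice (you via Theorem \ref{class M}(v), the paper via Theorem \ref{infinities} and its corollary), minorize $u$ by the friendly giant on $B(0,R)$ using the scaling $U_R(x)=R^{2/(m-1)}U_1(x/R)$ together with the comparison principle, and let $R\to\infty$ to force $u\equiv\infty$, contradicting finiteness on a dense set. The only differences are cosmetic (the paper bounds $U$ below by $\inf_{B(0,1/2)}U$ instead of evaluating at a fixed point), so nothing further is needed.
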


\begin{proof}
 We make a counter assumption: there exists a time $t_0\in (0,T)$, such that $|\Xi^\perp(t_0)| >0$. Then, by Theorem \ref{infinities}, we have $\Xi^\perp(t_0)=\R^n$. 
 We consider the friendly giant \eqref{friendly giant} with $\Omega=B(0,1)$ and see that
\[
V(x,t)=U(x)(t-t_0)^{-\frac 1 {m-1}}
\]
 is a weak solution to the porous medium equation in $B(0,1)\times(t_0,T)$. Let $r>0$. 
By the scaling property of solutions, the function 
\[
U\left(\frac x r \right)\left( \frac{r^2}{t-t_0}\right)^{\frac 1 {m-1}}
\]
is a weak solution in $B(0,r)\times (t_0,T)$. By the comparison principle
\[
u(x,t) \ge U\left(\frac x r \right) \left( \frac{r^2}{t-t_0}\right)^{\frac 1 {m-1}}
\quad \text{for every}\quad 
(x,t)\in B(0,r)\times (t_0,T).
\] 
Note that 
\[
\eta = \inf_{x\in B(0,\frac12)}U(x)>0.
\] 
Thus 
\[
u(x,t)\ge  \eta \left( \frac{r^2}{t-t_0}\right)^{\frac 1 {m-1}}
\quad\text{for every}\quad
(x,t)\in B\left(0,\frac r2\right)\times (t_0,T).
\] 
By letting $r\to\infty$, we conclude $u\equiv \infty$. This is a contradiction, as $u$ was assumed to be finite in a dense subset. Hence $|\Xi^\perp (t)| =0 $ for every $t\in(0,T)$ and thus $u\in\mathfrak{B}$. 
\end{proof}


\begin{thebibliography}{10}
\bibitem{avelin15}
Benny Avelin and Teemu Lukkari.
\newblock Lower semicontinuity of weak supersolutions to the porous medium
  equation.
\newblock {\em Proc. Amer. Math. Soc.}, 143(8):3475--3486, 2015.

\bibitem{lehtela17}
Verena B\"ogelein, Pekka Lehtel\"a and Stefan Sturm.
\newblock Regularity of weak solutions and supersolutions to the porous medium
  equation.
\newblock {\em Manuscript}, 2017.

\bibitem{dibenedettof85}
Emmanuele DiBenedetto and Avner Friedman.
\newblock H\"older estimates for nonlinear degenerate parabolic systems.
\newblock {\em J. Reine Angew. Math.}, 357:1--22, 1985.

\bibitem{dibenedetto12}
Emmanuele DiBenedetto, Ugo Gianazza, and Vincenzo Vespri.
\newblock {\em Harnack's Inequality for Degenerate and Singular Parabolic Equations}.
\newblock Springer Monographs in Mathematics. Springer, New York, 2012.

\bibitem{dibenedetto88}
Emmanuele DiBenedetto.
\newblock Intrinsic {H}arnack type inequalities for solutions of certain
  degenerate parabolic equations.
\newblock {\em Arch. Rational Mech. Anal.}, 100(2):129--147, 1988.

\bibitem{dibenedetto93}
Emmanuele DiBenedetto.
\newblock {\em Degenerate Parabolic Equations}.
\newblock Universitext. Springer-Verlag, New York, 1993.

\bibitem{dahlbergk84}
Bj{\"o}rn E.~J. Dahlberg and Carlos~E. Kenig.
\newblock Nonnegative solutions of the porous medium equation.
\newblock {\em Comm. Partial Differential Equations}, 9(5):409--437, 1984.

\bibitem{daskalopoulosk07}
Panagiota Daskalopoulos and Carlos~E. Kenig.
\newblock {\em Degenerate Diffusions. Initial value problems and local regularity theory}.
\newblock  EMS Tracts in  Mathematics. European Mathematical Society (EMS), Z\"urich, 2007.

\bibitem{kinnunenl08}
Juha Kinnunen and Peter Lindqvist.
\newblock Definition and properties of supersolutions to the porous medium
  equation.
\newblock {\em J. Reine Angew. Math.}, 618:135--168, 2008.

\bibitem{kinnunenl17} 
Juha Kinnunen and Peter Lindqvist.
Erratum to Definition and properties of supersolutions to the porous medium equation 
(J. reine angew. Math. 618 (2008), 135--168) 
\newblock {\em J. Reine Angew. Math.}, 725:249, 2017.

\bibitem{kinnunen16}
Juha Kinnunen and Peter Lindqvist.
\newblock Unbounded supersolutions of some quasilinear parabolic equations: a dichotomy.
\newblock {\em Nonlinear Anal.}, 131:229--242, 2016.

\bibitem{kuusi16}
Tuomo Kuusi, Peter Lindqvist, and Mikko Parviainen.
\newblock Shadows of infinities.
\newblock {\em Ann. Mat. Pura Appl. (4)}, 195(4):1185--1206, 2016.

\bibitem{lehtela16}
Pekka Lehtel\"a.
\newblock A weak Harnack estimate for supersolutions to the porous medium equation.
\newblock {\em Differential Integral Equations} (to appear). Available at \texttt{https://arxiv.org/abs/1607.00785}.

\bibitem{vazquez07}
Juan~Luis V{\'a}zquez.
\newblock {\em The Porous Medium Equation. Mathematical theory}.
\newblock Oxford Mathematical Monographs. Oxford
University Press, Oxford, 2007.

\bibitem{wu01}
Zhuoqun Wu, Junning Zhao, Jingxue Yin, and Huilai Li.
\newblock {\em Nonlinear Diffusion Equations}.
\newblock World Scientific Publishing Co., Inc., River Edge, NJ, 2001.


\end{thebibliography}
\end{document}